\def\noi{\noindent}
\def\vp{\varphi}
\def\s1deg{S^1\text{\rm -Deg\,}}
\newtheorem{theorem}{Theorem}[section]
\newtheorem{lemma}[theorem]{Lemma}
\newtheorem{corollary}[theorem]{Corollary}
\newtheorem{definition}[theorem]{Definition}
\newtheorem{remark}[theorem]{Remark}
\newtheorem{remark-definition}[theorem]{Remark and Definition}
\begin{document}

  \author[1]{Z Balanov}
  \author*[2]{E Hooton	}
  \author[3]{A Murza} 
 
  \affil[2]{Department of Mathematical Sciences, University of Texas at Dallas, Richardson, Texas, 75080 US.
  Email: exh121730@utdallas.edu}
  \affil[1]{Department of Mathematical Sciences, University of Texas at Dallas, Richardson, Texas, 75080 US.}
  \affil[3]{Institute of Mathematics Simion Stoilow of the Romanian Academy, P.O. Box 1--764, RO--014700, Bucharest, Romania.}
  
  \title{Periodic Solutions of vdP and vdP-like Systems on $3$--Tori}

  \abstract{Van der Pol equation (in short, vdP) as well as many its non-symmetric generalizations (the so-called van der Pol-like oscillators (in short, vdPl)) serve as nodes in coupled networks modeling real-life phenomena. Symmetric properties of periodic regimes  of networks  of vdP/vdPl  depend on symmetries of coupling. In this paper, we consider $N^3$ identical 
  vdP/vdPl oscillators arranged in a cubical lattice, where opposite faces are identified in the same way as for a $3$-torus. Depending on which nodes impact the dynamics of a given node, we distinguish between $\mathbb D_N \times \mathbb D_N \times \mathbb D_N$-equivariant systems and their 
  $\mathbb Z_N \times \mathbb Z_N \times \mathbb Z_N$-equivariant  counterparts. In both settings, the local equivariant Hopf bifurcation together with the global 
  existence of periodic solutions with prescribed period and symmetry, are studied.
  The methods used in the paper are based on the results rooted in both equivariant degree theory and (equivariant) singularity theory.}
  \keywords{Equivariant Hopf bifurcation, coupled vdP oscillators, existence of periodic solutions}
  \classification[AMS 2010]{Primary: 37G40 Secondary: 34C25 }

  \startpage{1}

\maketitle

\section{Introduction}\
{\bf (a) Subject and Goal.}
The van der Pol equation (in short, vdP, cf. \eqref{vdP}), originally introduced in \citep{VdP} to study stable oscillations in electrical circuits, is very often considered as a starting point of applied nonlinear dynamics.  An important feature of equation   \eqref{vdP} is that it respects the antipodal symmetry. Different generalizations of vdP, which break this symmetry, have been considered by many authors in connection to a wide range of applied problems (in what follows, we will call these generalizations van der Pol-like equations (in short, vdPl)). Examples of particular importance include the FitzHugh-Nagumo model  (see, for example, \citep{FitzHugh}) and the realistic kinetic model of the chlorite-iodide-malonic acid reaction (see, for example, \citep{Boissonade}). To be more concrete about the importance of considering vdPl systems with quadratic terms, we refer to \citep{VdPlike} and the references therein.

In real life models, vdP as well as vdPl serve as nodes in coupled networks.
Symmetries of the coupling have an impact on the symmetries of the actual dynamics.
In this paper,  we will consider $N^3$ oscillators arranged in a cubical lattice, where opposite faces are identified in the same way as for a $3$-torus.
In such a configuration, two aspects of the coupling are important:
(i)  which nodes impact the dynamics of a given node (which we will call {\it coupling topology}), and
(ii) how a neighboring node impacts a given node (which we will call {\it coupling structure}).
For the coupling topology, we consider the following two cases:
(i)  all $6$ neighbors of a given node impact on that node's dynamics (we call such a coupling {\it bi-directional});
(ii) only $3$ neighbors of a given node impact on that node's dynamics (we call such a coupling {\it uni-directional}).
In the case of bi-directional coupling, the system respects a natural action of $\mathbb D_N\times \mathbb D_N\times \mathbb D_N$, while in the case of uni-directional coupling the symmetry generated by $\kappa$ is destroyed, hence the total symmetry group is $\mathbb Z_N \times \mathbb Z_N \times \mathbb Z_N$  (cf. \citep{3d-torus-applications} and references therein).  We will distinguish between two linear coupling structures, namely,  for a given node, either the $x$-variable of a neighbor or the $y$-variable of a neighbor is coupled to the $x$ variable of the specified node (compare \eqref{nvdP_2d} with \eqref{nvdPl_2d}). We will call these $x$-coupling and $y$-coupling respectively.

The {\it goal} of this paper is three-fold, namely, in the settings introduced above, we will: (i) establish the {\it occurrence} of the Hopf bifurcation, classify symmetric properties of the bifurcating branches and estimate their number; (ii) study {\it stability} of the corresponding periodic solutions, and (iii) investigate the {\it existence} of periodic solutions with prescribed period and symmetry.

\medskip

{\bf (b) Results.} Keeping in mind a wide spectrum of potential applications in natural sciences and engineering, it is worthy to study the above mentioned problems (occurrence, stability and existence) in all possible settings. The usual dilemma of keeping a balance between Scylla of completeness and Charybdis of reasonable size of the manuscript,  resulted in our paper as follows:

\medskip

(i) Although, using the methods developed in this paper,  the  occurrence/multiplicity estimates/symmetry classification of the Hopf bifurcation can be established for any combination of  bi-directional/uni-directional  vdP/vdPl $x$-coupled/$y$-coupled systems,
we only treat two cases, namely, that of bi-directionally $y$-coupled vdP oscillators and uni-directionally $x$-coupled vdPl oscillators.

(ii) We provide an instability result for   bi-directionally $y$-coupled vdP oscillators and stability results for  uni-directionally $x$-coupled vdPl oscillators.

(iii) We establish the  existence of periodic solutions with prescribed period and symmetry only in the case of bi-directionally $y$-coupled vdP oscillators.

\medskip
{\bf (c) Methods.} The methods used in this paper are based on the results rooted in both equivariant degree theory and (equivariant) singularity theory.
To be more specific:

\medskip

(i) To treat the occurrence/multiplicity estimates/symmetric classification  of the Hopf bifurcation, we appeal to the abstract results presented in \citep{AED} (see also \citep{BK-chapter, Sliding}).

(ii) The stability results are obtained in the framework of the theory presented in \citep{Guckenheimer} (see also \citep{GolSteShef}).

(iii) The main ingredient to establish the existence of periodic solutions with prescribed period and symmetry is Theorem \ref{theom-abstract-Hirano-Ryb} which was presented in
\citep{AED} and \citep{BFK} (see also \citep{Hirano-Rybicki}).

\smallskip

For the representation theory background, we refer to \citep{Brocker-tomDieck,Serre}.
\medskip

{\bf  (d) Overview.} After the Introduction, the paper is organized as follows. In Section \ref{onetorus_theor}, we formulate main results of the paper. Section \ref{sect:occurrence-result}
is devoted to the proof of the main occurrence/multiplicity/symmetry results (Theorems \ref{thm_vdp} and \ref{thm_vdpl}). The proof (see Subsection \ref{Proof-Theor-occurr}) is based on an abstract Theorem \ref{abs_therom} and equivariant spectral information collected in Subsections \ref{sec_decop}--\ref{iso_sec}. We believe that several algebraic observations related to the computation of maximal twisted orbit types in complexifications of tensor product representations (see Subsection \ref{iso_sec}) may be interesting in their own. Section \ref{sect-stability} contains the proof of the instability result for system \eqref{nvdP_2d}  (see Theorem \ref{Dn_instability}) and stability result for system \eqref{nvdPl_2d} (see Theorem \ref{Zn_stab}). The proof follows the standard lines (see \citep{Guckenheimer}, Theorem 3.4.2, and \citep{GolSteShef}),  and combines the spectral equavariant data from Subsections \ref{sec_decop}--\ref{iso_sec} with the computations of the first Lyapunov coefficient from Subsection \ref{subsec:Lyapunov-coef}. The proof of the existence result (see Theorem  \ref{Existence-Dihedral}) is given in Section
\ref{sec:Existence}
where one can also find an adapted version of the abstract existence result given in \citep{AED}, Theorem 12.7,  and \citep{BFK} (cf. Theorem  \ref{theom-abstract-Hirano-Ryb}).
We conclude with a short Appendix where  several symbols frequently used in this paper to denote some twisted groups are explained (cf. \citep{AED}).

\medskip
{\bf (e) Acknowledgements.} The first two authors acknowledge the support from National Science Foundation through grant DMS-413223. The first author is grateful for the support of the Gelbart Research Institute for mathematical sciences at Bar Ilan University. The third author acknowledges a postdoctoral BITDEFENDER fellowship from
the Institute of Mathematics Simion Stoilow of the Romanian Academy, Contract of Sponsorship No. 262/2016.

\section{Main Results}\label{onetorus_theor}
In this paper, we are interested in networks of identical vdP oscillators \begin{equation}\label{vdP}
\begin{array}{ll}
        \dot{x}&=\nu(ax-x^3)-y\\
        \dot{y}&=bx
    \end{array}
\end{equation}
and vdP-like oscillators
\begin{equation}\label{vdPl}
\begin{array}{ll}
        \dot{x}&=-y-x^3-x^2+ax\\
        \dot{y}&=bx
    \end{array}
\end{equation}
coupled in the symmetric configuration of a three-dimensional torus. To be more precise, we consider $N^3$ oscillators, where $N$ is an {\it odd} number, with both bi-directional coupling
\begin{equation}\label{nvdP_2d}
\left\{
    \begin{aligned}
        \dot{x}_{(\alpha,\beta,\gamma)}= &
        \nu(ax_{(\alpha,\beta,\gamma)}-x_{(\alpha,\beta,\gamma)}^3) -y_{(\alpha,\beta,\gamma)}
        \\
        &+\delta \left(2y_{(\alpha,\beta,\gamma)}-y_{(\alpha+1,\beta,\gamma)}-y_{(\alpha-1,\beta,\gamma)}\right)\\& +\zeta \left(2y_{(\alpha,\beta,\gamma)}-y_{(\alpha,\beta+1,\gamma)}-y_{(\alpha,\beta-1,\gamma)}\right)
        \\
        &+\varepsilon \left(2y_{(\alpha,\beta,\gamma)}-y_{(\alpha,\beta,\gamma+1)}y_{(\alpha,\beta,\gamma-1)}\right)
        \\
        \dot{y}_{(\alpha,\beta,\gamma)}=& \; bx_{(\alpha,\beta,\gamma)}
    \end{aligned}
    \right.
\end{equation}
and uni-directional coupling
\begin{equation}\label{nvdPl_2d}
\left\{
    \begin{aligned}
        \dot{x}_{(\alpha,\beta,\gamma)}= &
        -y_{(\alpha,\beta,\gamma)}-x_{(\alpha,\beta,\gamma)}^3-x_{(\alpha,\beta,\gamma)}^2+ax_{(\alpha,\beta,\gamma)}
        \\
        &+\delta \left(x_{(\alpha,\beta,\gamma)}-x_{(\alpha+1,\beta,\gamma)}\right)+\zeta \left(x_{(\alpha,\beta,\gamma)}-x_{(\alpha,\beta+1,\gamma)}\right)
        \\
        &+\varepsilon \left(x_{(\alpha,\beta,\gamma)}-x_{(\alpha,\beta,\gamma+1)}\right)
        \\
        \dot{y}_{(\alpha,\beta,\gamma)}=& \; bx_{(\alpha,\beta,\gamma)}
    \end{aligned}
    \right.
\end{equation}
Here $x,y \in \mathbb R^{N^3}$ and their entries are indexed by the triple $(\alpha,\beta,\gamma)$ where $\alpha,\beta,\gamma \in \{1,\cdots,N\}$, $\delta, \zeta,\varepsilon\in \mathbb R$ and $\nu,b>0$.
\begin{remark}
{\rm To avoid distinctions occurring due to the parity of $N$, we will only consider the case when $N$ is odd.}
\end{remark}

\begin{definition}\label{spa_temp}{\rm 
We will say that a periodic function $x:\mathbb R \to U$ with period $T$ has a (spatio-temporal) symmetry $H< G\times S^1$, if for every $(g,e^{i\theta})\in H$ and for every $t$,
$$
g\cdot x(t+\theta T/2\pi)= x(t).
$$ 
}
\end{definition}

\begin{theorem}\label{thm_vdp}
For each fixed ${\bf t}= (t_1,t_2,t_3)$, where $t_1,t_2,t_3 \in  \{1,\cdots,n\}$, put
\begin{equation}\label{Kt_def1}
K_{\bf t}= 1 + 2\delta(1-\cos(2\pi t_1/N))+2\zeta(1-\cos(2\pi t_2/N))+2\varepsilon(1-\cos(2\pi t_3/N)).
\end{equation}
If $K_{\bf t} > 0$, then system \eqref{nvdP_2d} undergoes Hopf bifurcation as $a$ passes $0$.
Furthermore, for each $(H^\varphi) \in \mathcal S({\bf t})$,  there exist $\frac{8N^3}{|H_1\times H_2\times H_3|}$ branches of bifurcating non-constant periodic solutions of \eqref{nvdP_2d} with limit frequency $\omega_{\bf t}= \sqrt{bK_{\bf t}}$ and   minimal symmetry $(H^\varphi) $  (here $\mathcal S (\bf t)$ is the set of spatio-temporal symmetries associated to {\bf t} which is described in Section \ref{iso_sec}, formula \eqref{desc_iso}).
\end{theorem}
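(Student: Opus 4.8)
The plan is to translate the local bifurcation analysis of \eqref{nvdP_2d} into the equivariant degree framework and then apply the abstract equivariant Hopf result (Theorem \ref{abs_therom}), whose hypotheses are purely spectral. First I would fix the symmetry group $G=\mathbb D_N\times\mathbb D_N\times\mathbb D_N$ acting on the phase space $V=\mathbb R^{N^3}\oplus\mathbb R^{N^3}$ by permuting the lattice labels $(\alpha,\beta,\gamma)$ (the three cyclic generators together with the reflection $\kappa$), and check directly that the right-hand side of \eqref{nvdP_2d} is $G$-equivariant. Because the coupling is invariant under cyclic shifts in each of the three directions, the linearization $A(a)$ of \eqref{nvdP_2d} at the origin commutes with the regular representation of $\mathbb Z_N^3<G$; hence the discrete Fourier transform in the three lattice directions simultaneously block-diagonalizes $A(a)$ into $2\times 2$ blocks indexed by ${\bf t}=(t_1,t_2,t_3)$.

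A direct Fourier computation shows that the ${\bf t}$-block reduces to
\[
A_{\bf t}(a)=\begin{pmatrix}\nu a & -K_{\bf t}\\ b & 0\end{pmatrix},
\]
with characteristic polynomial $\lambda^2-\nu a\,\lambda+bK_{\bf t}$. At $a=0$ the roots are $\lambda=\pm i\sqrt{bK_{\bf t}}$, which are purely imaginary precisely when $K_{\bf t}>0$; this both locates the crossing and pins down the limit frequency $\omega_{\bf t}=\sqrt{bK_{\bf t}}$. The transversality condition reduces to $\frac{d}{da}\re\lambda\big|_{a=0}=\nu/2$, which is nonzero since $\nu>0$. Feeding these two ingredients—a purely imaginary eigenvalue crossing together with the transversality condition—into Theorem \ref{abs_therom} yields the occurrence of the Hopf bifurcation as $a$ passes $0$.

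For the symmetry classification and the branch count I would compute the $G\times S^1$-equivariant bifurcation invariant attached to the crossing. The decisive step is to decompose the complexified isotypic component responsible for the crossing at $\omega_{\bf t}$ into irreducible $G\times S^1$-representations. Since the $\mathbb D_N^3$-action on $V$ is an external tensor product of three $\mathbb D_N$-actions, these irreducibles are (complexified) tensor products of the standard two-dimensional $\mathbb D_N$-irreducibles, twisted by the $S^1$-character of the crossing; the maximal twisted orbit types arising in this component form exactly the set $\mathcal S({\bf t})$ described in \eqref{desc_iso}. The bifurcation invariant is then a sum of basic $\s1deg$-generators indexed by these orbit types, and by the product structure of the equivariant degree each $(H^\varphi)\in\mathcal S({\bf t})$ appears with a nonzero coefficient, which forces a branch of the corresponding minimal symmetry.

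The main obstacle—and the genuinely novel ingredient—is the representation-theoretic bookkeeping for the tensor product $\mathbb D_N^3\times S^1$-module: identifying the maximal twisted isotropy subgroups $H^\varphi$, determining their spatial projections $H_1\times H_2\times H_3<\mathbb D_N^3$, and confirming the nonvanishing of the corresponding basic degrees (this is the content of Subsection \ref{iso_sec}). Once this is settled, the branch count follows from the orbit-counting principle underlying the equivariant degree: the $G$-orbit of a branch with isotropy $(H^\varphi)$ has cardinality $|G|/|H_1\times H_2\times H_3|$ (its $G$-stabilizer being the spatial projection $H_1\times H_2\times H_3$ of $H^\varphi$), and since $|\mathbb D_N^3|=(2N)^3=8N^3$ this yields exactly $8N^3/|H_1\times H_2\times H_3|$ distinct branches of minimal symmetry $(H^\varphi)$, as claimed.
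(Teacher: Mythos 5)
Your proposal is correct and follows essentially the same route as the paper: block-diagonalize the linearization over the isotypical decomposition to get the $2\times 2$ blocks with eigenvalues $\tfrac{1}{2}\bigl(\nu a\pm\sqrt{(\nu a)^2-4bK_{\bf t}}\bigr)$, verify the crossing/transversality hypotheses of the abstract equivariant Hopf theorem (Theorem \ref{abs_therom}), identify the maximal twisted orbit types in $(\mathcal V^d_{\bf t})^c$ via the tensor-product structure (the content of Lemma \ref{YTM2}), and count branches as $|G|/|H_1\times H_2\times H_3|=8N^3/|H_1\times H_2\times H_3|$. The only difference is cosmetic: you unpack the degree-theoretic mechanism behind Theorem \ref{abs_therom}, whereas the paper simply checks conditions (P1)--(P5) and cites it.
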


\begin{theorem}\label{thm_vdpl}
For each fixed ${\bf t}= (t_1,t_2,t_3)$, where $t_1,t_2,t_3 \in  \{1,\cdots,N\}$, system \eqref{nvdPl_2d} undergoes Hopf bifurcation as $a$ passes
\begin{equation}\label{eq:bifur-points}
a^z_{\bf t}= \delta(1-\cos(2t_1\pi/N))+\zeta(1-\cos(2t_2\pi/N))+\varepsilon(1-\cos(2t_3\pi/N)).
\end{equation}
Furthermore, there exist a branch of bifurcating non-constant periodic solutions of \eqref{nvdPl_2d} with symmetry $(\mathbb Z_n\times \mathbb Z_n \times \mathbb Z_n)^{(t_1,  t_2, t_3)}$ and limit period
\begin{align*}
P^1_{\bf t}&=\Big|\frac{4\pi}{H+\sqrt{H^2+4b}}\Big|,
\end{align*}
and a branch with the same symmetry and limit period
\begin{align*}
P^2_{\bf t}&=\Big|\frac{4\pi}{H-\sqrt{H^2+4b}}\Big|,
\end{align*}
where $ H= \delta\sin((2t_1\pi/N))+\zeta\sin(2t_2\pi/N))+\varepsilon(\sin(2t_3\pi/N))$ and the symbol  $(\mathbb Z_n \times \mathbb Z_n \times \mathbb Z_n)^{(t_1,  t_2, t_3)}$ is described in Appendix.
\end{theorem}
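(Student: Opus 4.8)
The plan is to pin down the bifurcation through a complete spectral analysis of the linearization of \eqref{nvdPl_2d} at the trivial equilibrium $(x,y)=(0,0)$, organized by the $\mathbb Z_N\times\mathbb Z_N\times\mathbb Z_N$-isotypic decomposition, and then to feed the resulting critical data into the abstract equivariant Hopf bifurcation result of Theorem \ref{abs_therom}. First I would note that the quadratic and cubic terms $-x_{(\alpha,\beta,\gamma)}^2-x_{(\alpha,\beta,\gamma)}^3$ carry no linear part, so the linearized $x$-equation is governed by $a\,x$ together with the three uni-directional differences. Since the symmetry is generated only by the cyclic shifts, the irreducible $\mathbb Z_N^3$-representations are the one-dimensional complex characters indexed by $\mathbf t=(t_1,t_2,t_3)$, on which a shift such as $x_{(\alpha+1,\beta,\gamma)}\mapsto e^{2\pi i t_1/N}x$ acts by a root of unity. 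The discrete Fourier transform therefore block-diagonalizes the linearization into $2\times 2$ complex blocks
\begin{equation*}
J_{\mathbf t}=\begin{pmatrix} a+\lambda_{\mathbf t} & -1\\ b & 0\end{pmatrix},\qquad \lambda_{\mathbf t}=\delta(1-e^{2\pi i t_1/N})+\zeta(1-e^{2\pi i t_2/N})+\varepsilon(1-e^{2\pi i t_3/N}),
\end{equation*}
one for each character $\mathbf t$, whose real part $\mathrm{Re}(\lambda_{\mathbf t})=a^z_{\mathbf t}$ reproduces \eqref{eq:bifur-points}.

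The next step is to read the bifurcation off the trace $T=a+\lambda_{\mathbf t}$ and the determinant $\det J_{\mathbf t}=b$. Imposing $\mu=i\omega$ in the characteristic equation $\mu^2-T\mu+b=0$ forces $\mathrm{Re}(T)=0$, and since $\mathrm{Re}(\lambda_{\mathbf t})=a^z_{\mathbf t}$ this pins the critical parameter to $a=a^z_{\mathbf t}$. A feature worth emphasizing is that at this value both roots are simultaneously purely imaginary: writing $T=i\tau$ with $\tau=\mathrm{Im}(T)=\pm H$, one gets $\mu=i(\tau\pm\sqrt{\tau^2+4b})/2$, so a single critical mode $\mathbf t$ produces two distinct critical frequencies, namely the roots of $\omega^2-\tau\omega-b=0$. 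Passing to periods $P=2\pi/|\omega|$ and using $\tau^2=H^2$ yields precisely $P^1_{\mathbf t}$ and $P^2_{\mathbf t}$. Finally I would verify transversality: differentiating the characteristic equation in $a$ (with $dT/da=1$) gives $d\mu/da=\mu/(2\mu-T)$, which at $\mu=i\omega$, $T=i\tau$ equals the real number $\omega/(2\omega-\tau)$; since $b>0$ together with $\omega^2=\tau\omega+b$ excludes both $\omega=0$ and $2\omega=\tau$, this number is nonzero and the crossing is transversal.

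It then remains to identify the symmetry of the bifurcating branches and to invoke the abstract machinery. Because the $\mathbb Z_N^3$-isotypic component attached to $\mathbf t$ is a single one-dimensional complex representation, the associated Hopf mode carries a unique maximal twisted isotropy, namely the twisted subgroup $(\mathbb Z_N\times\mathbb Z_N\times\mathbb Z_N)^{(t_1,t_2,t_3)}<\mathbb Z_N^3\times S^1$ in which each cyclic generator is coupled to the temporal phase shift dictated by $\mathbf t$ (this is the group recalled in the Appendix). Supplying the critical data $a=a^z_{\mathbf t}$, the two frequencies, and this isotropy to Theorem \ref{abs_therom} then produces, for each critical frequency, one branch of non-constant periodic solutions of \eqref{nvdPl_2d} with limit period $P^1_{\mathbf t}$, respectively $P^2_{\mathbf t}$, and with the asserted symmetry.

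The step I expect to be the main obstacle is the bookkeeping forced by the complex, non-self-dual representation theory of $\mathbb Z_N^3$: on the real phase space the characters $\mathbf t$ and $-\mathbf t$ are complex conjugate, so the real spectrum at criticality is the union $\{\pm i\omega^1_{\mathbf t},\pm i\omega^2_{\mathbf t}\}$ arising from the conjugate block $\overline{J_{\mathbf t}}$, and one must check that it splits into exactly the two distinct purely imaginary pairs required to apply the bifurcation theorem, with no spurious resonance merging them. By contrast with the dihedral (vdP) setting of Theorem \ref{thm_vdp}, where higher-dimensional real-irreducible summands force the elaborate orbit-type set $\mathcal S(\mathbf t)$, here the one-dimensionality of the characters makes the twisted isotropy unique and the branch count transparent, so the genuine computational effort is confined to the spectral identification of $a^z_{\mathbf t}$, the frequencies, and the transversality check above.
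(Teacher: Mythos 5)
Your proposal follows essentially the same route as the paper: block-diagonalize the linearization over the $\mathbb Z_N\times\mathbb Z_N\times\mathbb Z_N$-isotypic components into complex $2\times 2$ blocks (the paper's $L_{\bf t}(a)$), locate the purely imaginary eigenvalues and the two limit periods from the characteristic equation, check transversality, and feed the unique maximal twisted isotropy $(\mathbb Z_N\times \mathbb Z_N\times \mathbb Z_N)^{(t_1,t_2,t_3)}$ into Theorem \ref{abs_therom}, exactly as the paper does. The one blemish is a sign slip: with your trace $T=a+\lambda_{\bf t}$ and $\mathrm{Re}\,\lambda_{\bf t}=a^z_{\bf t}$, the condition $\mathrm{Re}\,T=0$ yields $a=-a^z_{\bf t}$ rather than $a=a^z_{\bf t}$, though this merely mirrors a sign convention the paper itself elides in passing from \eqref{nvdPl_2d} to its formula for $H_{\bf t}(a)$, and it affects neither the periods nor the symmetry classification.
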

\begin{remark}
{\rm We do not guarantee that $P_{\bf t}^1$ or $P_{\bf t}^2$ are minimal periods. For this reason, it is possible for a single solution to have both $P_{\bf t}^1$ and $P_{\bf t}^2$ as a period, in which case we only guarantee the existence of a single branch. This can occur in the case when $P_{\bf t}^1/P_{\bf t}^2\in\mathbb Q$.}
\end{remark}

\begin{theorem}\label{Dn_instability}
Put $\theta_N:= \frac{(N-1) \pi}{N}.$ Suppose $k_1\delta+k_2\zeta+k_3\varepsilon$ is less than
$$
\frac{1}{2(\cos(\theta_N)-1)}
$$
for some $k_1,k_2,k_3 \in \{0,1\}$. Then, all branches of bifurcating non-constant periodic solutions of \eqref{nvdP_2d} guaranteed by Theorem \ref{thm_vdp} are unstable.
\end{theorem}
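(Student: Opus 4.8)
The plan is to exploit the block structure of the linearization and to locate a single Fourier mode whose block is \emph{hyperbolically} unstable, i.e.\ carries a real positive eigenvalue lying outside the center subspace, which therefore cannot be neutralized by the cubic normal form that governs the critical modes. First I would recall from the isotypic decomposition of Subsections~\ref{sec_decop}--\ref{iso_sec} that the linearization $A(a)$ of \eqref{nvdP_2d} at the origin splits, over the modes ${\bf t}=(t_1,t_2,t_3)$, into the $2\times 2$ blocks
\[
A_{\bf t}(a)=\begin{pmatrix}\nu a & -K_{\bf t}\\ b & 0\end{pmatrix},\qquad
\lambda^{\pm}_{\bf t}(a)=\tfrac12\bigl(\nu a\pm\sqrt{(\nu a)^2-4bK_{\bf t}}\bigr),
\]
with $K_{\bf t}$ as in \eqref{Kt_def1}. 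The dichotomy is: if $K_{\bf t}>0$ the pair is purely imaginary at $a=0$ and crosses the axis transversally, which is the Hopf mechanism of Theorem~\ref{thm_vdp}; if $K_{\bf t}<0$, then since $\sqrt{(\nu a)^2+4b|K_{\bf t}|}>|\nu a|$ the block has a real eigenvalue $\lambda^{+}_{\bf t}(a)>0$ for \emph{every} value of $a$.

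Second, I would show the hypothesis produces exactly such a mode. Because $N$ is odd, $t=(N-1)/2$ realizes $\max_t\bigl(1-\cos(2\pi t/N)\bigr)=1-\cos\theta_N$, while $t=N$ gives $1-\cos(2\pi)=0$. Choosing ${\bf t}^{*}$ with $t^{*}_j=(N-1)/2$ when $k_j=1$ and $t^{*}_j=N$ when $k_j=0$ collapses \eqref{Kt_def1} to
\[
K_{{\bf t}^{*}}=1+2(1-\cos\theta_N)\,(k_1\delta+k_2\zeta+k_3\varepsilon),
\]
so the assumed inequality $k_1\delta+k_2\zeta+k_3\varepsilon<\frac{1}{2(\cos\theta_N-1)}$ is precisely $K_{{\bf t}^{*}}<0$. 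Hence $A_{{\bf t}^{*}}(a)$ carries the positive real eigenvalue $\lambda^{+}_{{\bf t}^{*}}(a)>0$; equivalently, the origin possesses, independently of $a$, an unstable direction in the ${\bf t}^{*}$-isotypic component that is hyperbolic (bounded away from the imaginary axis) rather than critical.

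Third, I would transfer this instability to the periodic orbits. Fix any admissible branch, say the one bifurcating from a mode ${\bf t}_0$ with $K_{{\bf t}_0}>0$; along it the orbit $\gamma_a$ has amplitude $O(\sqrt{|a|})\to0$ and period $\to 2\pi/\omega_{{\bf t}_0}$ as $a\to0$. Consequently the monodromy operator of the variational equation $\dot\xi=Df(\gamma_a)\xi$ converges to $\exp\bigl(\frac{2\pi}{\omega_{{\bf t}_0}}A(0)\bigr)$, whose spectrum contains $\exp\bigl(\frac{2\pi}{\omega_{{\bf t}_0}}\lambda^{+}_{{\bf t}^{*}}(0)\bigr)>1$. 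By continuity of the Floquet multipliers, $\gamma_a$ has a multiplier of modulus $>1$ for all small $a$, so every branch guaranteed by Theorem~\ref{thm_vdp} is unstable. Since $\lambda^{+}_{{\bf t}^{*}}(a)>0$ on both sides of $a=0$, the conclusion is insensitive to the sub/supercriticality of the branches.

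This last step, made precise along the lines of \citep{Guckenheimer}, Theorem~3.4.2 (see also \citep{GolSteShef}), is where I expect the main work: one must justify that the hyperbolic unstable eigenvalue of the origin is robustly inherited by $\gamma_a$ as a genuine Floquet multiplier outside the unit disk. The key conceptual point to emphasize is that, unlike the merely critical modes—whose fate is decided by the cubic normal form and the first Lyapunov coefficient of Subsection~\ref{subsec:Lyapunov-coef}—a hyperbolic unstable direction cannot be stabilized by higher-order terms for small orbits; this is exactly why the hypothesis $K_{{\bf t}^{*}}<0$ is a genuine (rather than vacuous) sufficient condition for instability.
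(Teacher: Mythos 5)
Your proposal is correct and follows essentially the same route as the paper: pick ${\bf t}^*$ with $t_j^*=(N-1)/2$ exactly when $k_j=1$, observe that the hypothesis is precisely $K_{{\bf t}^*}<0$, conclude that the linearization at the origin has a positive real eigenvalue, and hence that every bifurcating branch is unstable. The only difference is that you spell out, via convergence of the monodromy operator and continuity of Floquet multipliers, the step the paper simply recalls as a known fact (an unstable equilibrium forces instability of all small bifurcating orbits).
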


\begin{theorem}\label{Zn_stab}
For any fixed $\delta,\zeta$ and $\varepsilon$, the equilibrium of system \eqref{nvdPl_2d}  is stable for $a<a^*$ and unstable for $a>a^*$, where $a^*$ is described in
Table~\ref{tabelaBifurca}. Furthermore, the \textbf{\textit {fully synchronized}} branch of  system \eqref{nvdPl_2d}, born at $a=0$, is stable if and only if $\delta,\zeta$ and $\varepsilon$ are negative.
\end{theorem}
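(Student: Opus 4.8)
The plan is to reduce everything to the $\mathbb Z_N\times\mathbb Z_N\times\mathbb Z_N$-isotypic blocks already assembled in Subsections \ref{sec_decop}--\ref{iso_sec}, and then to treat the fully synchronized branch by pairing those blocks with the Lyapunov-coefficient computation of Subsection \ref{subsec:Lyapunov-coef}. First I would linearize \eqref{nvdPl_2d} at the fully synchronized equilibrium $x=y=0$. Since the coupling is cyclic in each of the three lattice directions, the Jacobian is block-circulant and is simultaneously diagonalized by the characters of $\mathbb Z_N\times\mathbb Z_N\times\mathbb Z_N$. This splits the $2N^3\times 2N^3$ linearization into $N^3$ complex $2\times2$ blocks
\[
J_{\mathbf t}=\begin{pmatrix} a+d_{\mathbf t} & -1\\ b & 0\end{pmatrix},\qquad \mathbf t=(t_1,t_2,t_3),
\]
where $d_{\mathbf t}$ is the Fourier symbol of the coupling, with $\mathrm{Re}\,d_{\mathbf t}=\delta(1-\cos(2\pi t_1/N))+\zeta(1-\cos(2\pi t_2/N))+\varepsilon(1-\cos(2\pi t_3/N))$ and $\mathrm{Im}\,d_{\mathbf t}=\pm H$, in agreement with the frequency data behind \eqref{eq:bifur-points}.

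Next I would read off the stability of each block. Because $\det J_{\mathbf t}=b>0$ is real and positive, the product of the two eigenvalues is a positive real, so their arguments are opposite and their real parts necessarily share a common sign; that sign equals the sign of $\mathrm{Re}\,(\mathrm{tr}\,J_{\mathbf t})=a+\mathrm{Re}\,d_{\mathbf t}$. Hence block $\mathbf t$ is asymptotically stable precisely when $a+\mathrm{Re}\,d_{\mathbf t}<0$, and its eigenvalues are purely imaginary (the Hopf crossing recorded in Theorem \ref{thm_vdpl}) exactly at the critical value at which this quantity vanishes. The equilibrium is therefore asymptotically stable iff \emph{every} block is, i.e. iff $a<-\max_{\mathbf t}\mathrm{Re}\,d_{\mathbf t}=:a^*$, and unstable once $a$ passes $a^*$. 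The maximizing mode, and thus the closed form of $a^*$, depends on the signs of $\delta,\zeta,\varepsilon$: each coordinate contributes either $0$ (when the corresponding coefficient is nonnegative and $t_i=0$ is optimal) or a term involving $\max_t(1-\cos(2\pi t/N))=1-\cos(\theta_N)$. Enumerating the eight sign combinations of $(\delta,\zeta,\varepsilon)$ produces exactly the case split of Table~\ref{tabelaBifurca}.

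For the second assertion I would use that the synchrony subspace (all nodes carrying the same $(x,y)$) is flow-invariant and that on it \eqref{nvdPl_2d} reduces to the single oscillator \eqref{vdPl}, whose Hopf bifurcation at $a=0$ corresponds to the mode $\mathbf t=\mathbf 0$, where $d_{\mathbf 0}=0$. This bifurcation is supercritical because the first Lyapunov coefficient of \eqref{vdPl} computed in Subsection \ref{subsec:Lyapunov-coef} is negative, so within the synchrony subspace the emerging cycle is orbitally asymptotically stable for small $a>0$. Stability in the full phase space requires, in addition, that the transverse isotypic directions contract along the bifurcating cycle; near the Hopf point the transverse variational equation is, to leading order, the constant-coefficient family $\{J_{\mathbf t}:\mathbf t\neq\mathbf0\}$ evaluated at $a\approx 0^+$, whose stability, by the previous paragraph, is governed by the sign of $\mathrm{Re}\,d_{\mathbf t}$. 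Testing this on the axis modes $(t_1,0,0),\,(0,t_2,0),\,(0,0,t_3)$ forces $\delta,\zeta,\varepsilon<0$, while conversely those three inequalities make $\mathrm{Re}\,d_{\mathbf t}<0$ for \emph{every} $\mathbf t\neq\mathbf0$; combined with supercriticality this gives the claimed equivalence.

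The step I expect to be the main obstacle is the last one: rigorously tying the transverse Floquet exponents of the small-amplitude synchronized cycle to the constant blocks $J_{\mathbf t}$. This is where the equivariant Floquet / center-manifold machinery of \citep{Guckenheimer} (Theorem 3.4.2) and \citep{GolSteShef} enters, to justify that the synchrony and transverse isotypic components decouple at linear order so that the $O(\text{amplitude})$ corrections cannot flip the sign of the transverse exponents; this is legitimate precisely because $\mathrm{Re}\,d_{\mathbf t}$ stays bounded away from $0$ for $\mathbf t\neq\mathbf0$, so the generic (nondegenerate) situation is in force. A secondary, purely computational, hurdle is confirming the negativity of the first Lyapunov coefficient of \eqref{vdPl}, since the quadratic term $-x^2$ contributes to $\ell_1$ and must be carried carefully through the normal-form reduction rather than dropped as in the purely cubic van der Pol case.
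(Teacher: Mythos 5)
Your argument is essentially the paper's own proof: block-diagonalize the linearization over the $\mathbb Z_N\times\mathbb Z_N\times\mathbb Z_N$ Fourier modes, use $\det=b>0$ to conclude that the real parts of both eigenvalues of each $2\times 2$ block share the sign of the real part of its trace, locate the extremal mode by enumerating the signs of $\delta,\zeta,\varepsilon$ as in Table~\ref{tabelaBifurca}, and, for the synchronized branch, combine the center-manifold reduction to the single oscillator (Lemma \ref{teoorigin}, $l_1=-3/8<0$) with the negativity of the real parts on all transverse modes. The only divergence is a sign convention in the coupling symbol (your $a+\mathrm{Re}\,d_{\bf t}$ versus the paper's $H_{\bf t}(a)=a-\delta(1-\cos(2\pi t_1/N))-\cdots$), which affects how $a^*$ is read off from the table but not the structure of the argument; your extra care with the transverse Floquet exponents only makes explicit what the paper delegates to \citep{Guckenheimer}.
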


\begin{table}[h]
\begin{center}
\begin{tabular}{c|c|c|c|c}
\hline
sign$(\delta)$&sign$(\zeta)$& sign$(\epsilon)$& $a_*$ & ${\bf t} = (t_1,t_2,t_3) $ \\
\hline
-&-&-&0&$(0,0,0)$\\[0.25em]
+&-&-&$\delta\left(1-\cos\left(\theta_N\right)\right)$&$\left(\frac{N-1}{2},0,0\right)$\\[0.25em]
-&+&-&$\zeta\left(1-\cos\left(\theta_N\right)\right)$&$\left(0,\frac{N-1}{2},0\right)$\\[0.25em]
-&-&+&$\epsilon\left(1-\cos\left(\theta_N\right)\right)$&$\left(0,0,\frac{N-1}{2}\right)$\\[0.25em]
+&+&-&$(\delta+\zeta)\left(1-\left(\cos\theta_N\right)\right)$&$\left(\frac{N-1}{2},\frac{N-1}{2},0\right)$\\[0.25em]
+&-&+&$(\delta+\epsilon)\left(1-\left(\cos\theta_N\right)\right)$&$\left(\frac{N-1}{2},0,\frac{N-1}{2}\right)$\\[0.25em]
-&+&+&$(\zeta+\epsilon)\left(1-\left(\cos\theta_N\right)\right)$&$\left(0,\frac{N-1}{2},\frac{N-1}{2}\right)$\\[0.25em]
+&+&+&$(\delta+\zeta+\epsilon)\left(1-\left(\cos\theta_N\right)\right)$&$\left(\frac{N-1}{2},\frac{N-1}{2},\frac{N-1}{2}\right)$\\[0.25em]
\hline
\end{tabular}\end{center}
\caption{Details of the Hopf bifurcations from a stable equilibrium(here $\theta_N= \frac{(N-1)\pi}{N}$)}
\label{tabelaBifurca}
\end{table}

\begin{theorem}\label{Existence-Dihedral}
Assume that $K_{\bf t} \neq 0$  for any ${\bf t}=(t_1,t_2,t_3)$ (cf. \eqref{Kt_def1}). Choose $p \not\in\big\{\frac{2\pi(2k-1)}{\sqrt{bd}}\; : \; k \in \mathbb Z, \; d>0,\; d= K_{\bf t} \; \text{for some} \; {\bf t}\big\}$. Then, for each {\bf t} with $K_{\bf t} > (\frac{2\pi}{p})^2$, there exists a value of $\nu$ such that for each  $(H^{\varphi}) \in \mathcal S({\bf t})$,  system \eqref{nvdP_2d} admits $\frac{8N^3}{|H_1\times H_2\times H_3|}$  $p$-periodic solutions with minimal symmetry $(H^{\varphi})$ (here $\mathcal S (\bf t)$ is the set of symmetries associated to {\bf t} which is described in Section \ref{iso_sec}, formula \eqref{desc_iso}).
\end{theorem}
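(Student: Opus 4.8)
The plan is to reformulate the search for $p$-periodic solutions of \eqref{nvdP_2d} as an equivariant zero-finding problem and to apply the abstract existence result, Theorem \ref{theom-abstract-Hirano-Ryb}. First I would rescale time by $s=2\pi t/p$, so that $p$-periodic solutions of \eqref{nvdP_2d} correspond to $2\pi$-periodic solutions of the rescaled autonomous field, and work in the Sobolev space $H^1_{2\pi}(\mathbb R,\mathbb R^{2N^3})$ of $2\pi$-periodic loops. This space carries the action of $G\times S^1$ with $G=\mathbb D_N\times\mathbb D_N\times\mathbb D_N$ acting by the spatial symmetry of the lattice and $S^1$ by time shift; since the field of \eqref{nvdP_2d} is autonomous and $G$-equivariant, it induces a $G\times S^1$-equivariant map (of the usual vector-field-minus-derivative form, rendered compact in the standard way) whose nontrivial zeros are exactly the orbits of non-constant $p$-periodic solutions.

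Next I would read off the spectral data that governs the degree. By the isotypic decomposition of Subsections \ref{sec_decop}--\ref{iso_sec}, the linearization of the field at the origin splits along the modes indexed by ${\bf t}=(t_1,t_2,t_3)$, and on the ${\bf t}$-block the eigenvalues become purely imaginary precisely at the Hopf value of $a$, with imaginary part $\omega_{\bf t}=\sqrt{bK_{\bf t}}$. The role of the hypothesis $K_{\bf t}>(2\pi/p)^2$ is to place the target period $p$ inside the range of periods covered by the global continuum issuing from the ${\bf t}$-Hopf point, so that the crossing hypothesis of Theorem \ref{theom-abstract-Hirano-Ryb} is met for an appropriate value of $\nu$. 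The removed values $p=2\pi(2k-1)/\sqrt{bK_{\bf t}}$ are exactly the odd multiples of the fundamental period $2\pi/\omega_{\bf t}$, at which the $(2k-1)$-st harmonic would resonate --- a reflection of the residual antipodal symmetry carried by the odd nonlinearity $-x_{(\alpha,\beta,\gamma)}^3$; excluding them keeps the induced map nonsingular on the boundary sphere, hence admissible, and makes its $G\times S^1$-equivariant degree well defined.

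I would then identify the degree across this crossing. The crucial point is that the degree jump is supported on the isotypic component attached to ${\bf t}$, and its expansion in basic degrees is governed by the very twisted orbit-type computation already performed for the local bifurcation in Theorem \ref{thm_vdp}; in particular the coefficient standing at each $(H^\varphi)\in\mathcal S({\bf t})$ is nonzero. The abstract theorem then converts each such nonzero coefficient into an $(H^\varphi)$-orbit of $p$-periodic solutions, and counting the solutions in one $G\times S^1$-orbit (the temporal phase being absorbed by $S^1$) yields exactly $8N^3/|H_1\times H_2\times H_3|=|G|/|H_1\times H_2\times H_3|$ solutions with minimal symmetry $(H^\varphi)$ for the value of $\nu$ realizing the crossing.

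The main obstacle is verifying the hypotheses of Theorem \ref{theom-abstract-Hirano-Ryb} in the global rather than purely local regime: one must produce an a priori bound on the $p$-periodic solutions, so that the degree can be computed on a sufficiently large invariant ball, and one must ensure that the continuum issuing from the crossing actually attains the period $p$ rather than escaping to infinity or terminating prematurely. The dissipative cubic term $-x_{(\alpha,\beta,\gamma)}^3$ of the vdP field is what supplies the amplitude bound, while the transversality guaranteed by the non-resonance of $p$ keeps the leading degree coefficient from degenerating; reconciling these two with the frequency threshold $K_{\bf t}>(2\pi/p)^2$ is the delicate step of the argument.
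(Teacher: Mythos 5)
Your proposal takes the same top-level route as the paper---reduce the statement to the abstract existence result, Theorem \ref{theom-abstract-Hirano-Ryb}---but it then re-opens that theorem's black box: the time rescaling, the $G\times S^1$-equivariant loop-space formulation, admissibility, the degree jump, and the a priori bounds are all part of the \emph{proof} of Theorem \ref{theom-abstract-Hirano-Ryb} (given in \citep{AED} and \citep{BFK}), not obligations of its user. The paper's proof is instead a direct verification of hypotheses: system \eqref{nvdP_2d} has the form \eqref{syst:VDP-Hirano_Ryb} with $C$ the $y$-coupling (discrete Laplacian type) matrix, which is symmetric and $G_2$-equivariant; by the isotypical decomposition of Subsection \ref{sec_decop} and formula \eqref{Dn_blocks} one has $\sigma(C)=\{K_{\bf t}\}$, so the hypothesis $K_{\bf t}\neq 0$ is exactly what makes $C$ non-singular---a required hypothesis of the abstract theorem that your write-up never invokes; $\mathcal O(K_{\bf t})=\mathcal S({\bf t})$ by Lemma \ref{YTM2}; and the conditions imposed on $p$ in Theorem \ref{Existence-Dihedral} are verbatim those of Theorem \ref{theom-abstract-Hirano-Ryb} with $\mu = bK_{\bf t}$. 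Consequently the ``delicate step'' you leave open at the end---producing the amplitude bound and ensuring the continuum attains the period $p$---is not something the application must supply: the abstract theorem's conclusion already asserts the existence of the required $\nu$, so once the hypotheses above are checked the proof is complete. Your reading of the excluded periods as odd-harmonic resonances tied to the antipodal symmetry of the cubic term is sound, and your count $8N^3/|H_1\times H_2\times H_3|=|G_2|/|H|$ agrees with the paper's; the only substantive omission is the unused hypothesis $K_{\bf t}\neq 0$ and, with it, the non-singularity of $C$.
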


\section{Equivariant spectral  data and first Lyapunov coefficient}
\subsection{Isotypical decomposition of the phase space}\label{sec_decop}

Although \eqref{nvdP_2d} and \eqref{nvdPl_2d} have different symmetry groups, they have the same phase space ($\mathbb R^{N^3}$) as  a geometric set. 
Since it is usually unambiguous, we will use the same notation for the representations of $G_1:= \mathbb Z_N \times \mathbb Z_N \times \mathbb Z_N$ and $G_2:= \mathbb D_N \times \mathbb D_N \times \mathbb D_N$. Put $V := \mathbb R^{N^3}$ and denote by $W := V \oplus V$ the phase space of systems \eqref{nvdP_2d} and \eqref{nvdPl_2d}.
To describe spatial symmetries of system \eqref{nvdPl_2d}, we will consider $G_1= \mathbb Z_N \times \mathbb Z_N \times \mathbb Z_N$ as a subgroup of $ S^1\times S^1\times S^1$ and define the $G_1$-action on $V$ by specifying how each of its generators acts, namely:

\begin{align*}
((e^{\frac{2\pi i}{N}},1,1)\cdot x)_{(\alpha,\beta,\gamma)}= x_{(\alpha+1,\beta,\gamma)}\\
((1,e^{\frac{2\pi i}{N}},1)\cdot x)_{(\alpha,\beta,\gamma)}= x_{(\alpha,\beta+1,\gamma)}\\
((1,1,e^{\frac{2\pi i}{N}})\cdot x)_{(\alpha,\beta,\gamma)}= x_{(\alpha,\beta,\gamma+1)}.
\end{align*}

Here $+$ is taken modulo $N$. By direct verification, the right-hand side of system \eqref{nvdPl_2d} is $G_1$-equivariant.
To extend this action to a  $G_2$-action, we need to specify how the remaining generators act, namely:

\begin{align*}
((\kappa,1,1)\cdot x)_{(\alpha,\beta,\gamma)}= x_{(-\alpha,\beta,\gamma)}\\
((1,\kappa,1)\cdot x)_{(\alpha,\beta,\gamma)}= x_{(\alpha,-\beta,\gamma)}\\
((1,1,\kappa)\cdot x)_{(\alpha,\beta,\gamma)}= x_{(\alpha,\beta,-\gamma)}
\end{align*}
where $-$ is again taken modulo $N$. It is clear that the right-hand side of \eqref{nvdP_2d} is $G_2$-equivariant.

 To describe the isotypical decomposition of $V$ as a $G_1$-space, we need to classify all (real) irreducible $G_1$-representations. For each $0\leq t_1,t_2,t_3\leq N-1$, put ${\bf t}:= (t_1,t_2,t_3)$ and denote by $\mathcal V^z_{\bf t}$ an irreducible representation of $G_1$ associated with ${\bf t}$. We have
\begin{equation}\label{Z_triv_rep}
\mathcal V^z_{0,0,0}= \mathbb R
\end{equation}
is the trivial real $G_1$-representation. For $(t_1,t_2,t_3)\neq (0,0,0)$, put
\begin{equation}\label{irr_def}
\mathcal V^z_{\bf t}=\mathbb R^2
\end{equation}
and define the $G_1$-action as follows:
$$
\Big(e^{\frac{2k_1\pi i}{N}},e^{\frac{2k_2\pi i}{N}},e^{\frac{2k_3\pi i}{N}}\Big)\cdot  \begin{bmatrix}
x \\ y
\end{bmatrix} = A  \begin{bmatrix} x \\ y\end{bmatrix},
$$
where
$$
A:= \begin{bmatrix}
\cos\big((k_1t_1+k_2t_2+k_3t_3)\left(\frac{2 \pi}{N}\right)\big) & \sin\big((k_1t_1+k_2t_2+k_3t_3)\left(\frac{2 \pi}{N}\right)\big)\\
-\sin\big((k_1t_1+k_2t_2+k_3t_3)\left(\frac{2 \pi}{N}\right)\big) & \cos\big((k_1t_1+k_2t_2+k_3t_3)\left(\frac{2 \pi}{N}\right)\big)
\end{bmatrix} 
$$
By direct verification, $\mathcal V^z_{\bf t}$ is $G_1$-equivalent to $\mathcal V^z_{-\bf t}$ where $-$ is taken modulo $N$. Hence, there is one one-dimensional trivial representation and $(N^3-1)/2$ non-trivial two-dimensional non-equivalent $G_1$-representations.

For each fixed ${\bf t}$, we define vectors $x^1_{\bf t}$ and $x^2_{\bf t}$ by specifying them component-wisely as follows:
$$
 \big(x^1_{\bf t}\big)_{\alpha,\beta,\gamma}= \cos(\alpha t_1+\beta t_2 + \gamma t_3)
 $$
 $$
  \big(x^2_{\bf t}\big)_{\alpha,\beta,\gamma}= \sin(\alpha t_1+\beta t_2 + \gamma t_3).
  $$
Define a family of subspaces of $V^z_{\bf t}$ by
$$V^z_{\bf t}= \text{span} (x^1_{\bf t}, x^2_{\bf t}).$$
Notice that $V^z_{\bf t}$ is a $G_1$-irreducible component of $V$ and  is $G_1$-equivalent to $\mathcal V^z_{\bf t}$ (cf. \eqref{Z_triv_rep}, \eqref{irr_def}).

\begin{remark}\label{Z_prim_decomp}
{\rm  $V$ admits a primary $G_1$-decomposition which includes every $G_1$-irreducible representation.}
\end{remark}

Let us denote by  $\mathcal U_0^d$ the trivial one-dimensional real $\mathbb D_N$-representation and by $\mathcal U_t^d$ the natural 2-dimensional real $\mathbb D_N$-representation, where the action is defined by

$$
e^{\frac{2k\pi i}{N}}\cdot  \begin{bmatrix}
x\\y\end{bmatrix}= \begin{bmatrix}
\cos\big(kt\left(\frac{2 \pi}{N}\right)\big) & \sin\big(kt\left(\frac{2 \pi}{N}\right)\big)\\
-\sin\big(kt\left(\frac{2 \pi}{N}\right)\big) & \cos\big(kt\left(\frac{2 \pi}{N}\right)\big)
\end{bmatrix} \begin{bmatrix}
x\\y
\end{bmatrix}$$
and
$$
\kappa\cdot \begin{bmatrix}
x\\y
\end{bmatrix} = \begin{bmatrix}
y\\x
\end{bmatrix}
$$
Denote
$$\mathcal V^d_{\bf t}:=
\mathcal U_{t_1}^d \otimes \mathcal U_{t_2}^d \otimes \mathcal U_{t_3}^d. $$
Since $\mathcal U_{t}^d$ is of real type for any $t= 0 ,\cdots, N$, it is easy to see that $\mathcal V_{\bf t}^d$ is a real {\it irreducible} $G_2=(\mathbb D_N \times \mathbb D_N\times \mathbb D_N)$-representation. Furthermore, the dimension of $\mathcal V_{\bf t}^d$ is either $1,2,4$ or $8$ depending on how many non-zero components {\bf t} has.
Put ${\bf t}^\dagger:=(t_1,-t_2,t_3)$, ${\bf t}^\# =(t_1,t_2,-t_3)$ and ${\bf t}^* :=(t_1,t_2,-t_3)$. Put
$$
V_{\bf t}^d := \text{span}(x^1_{\bf t},x^2_{\bf t},x^1_{\bf t^\dagger},x^2_{\bf t^\dagger},x^1_{\bf t^\#},x^2_{\bf t^\#},x^1_{\bf t^*},x^2_{\bf t^*})
$$
By simple but lengthy computations, one can easily show that $V_{\bf t}^d$ is  $G_2$-invariant and equivalent to $\mathcal V^d_{\bf t}$.

\begin{remark}\label{prim_decom}
{\rm  $V$ admits a primary $G_2$-decomposition, however, unlike its decomposition as a $G_1$-space (cf. Remark \ref{Z_prim_decomp}), the decomposition as a $G_2$-space does not include every $G_2$-irreducible representation.}
\end{remark}

\subsection{Equivariant spectral decomposition}\label{sec_eign}
The linearization of system \eqref{nvdP_2d} at the origin restricted to the isotypical component $V^d_{\bf t}\oplus V^d_{\bf t}$ is given by $A^d_{\bf t}(a):V^d_{\bf t}\oplus V^d_{\bf t} \to V^d_{\bf t}\oplus V^d_{\bf t}$, where
\begin{equation}\label{Dn_blocks}
A^d_{\bf t}(a)= \left(
\begin{array}{cc}
\nu a & -K_{\bf t}\\
b & 0
\end{array}
\right) \otimes \text{Id}_{\bf t},
\end{equation}
\begin{equation}\label{Kt_def}
K_{\bf t}=  1+2\delta(1-\cos(2\pi t_1/N))+2\zeta(1-\cos(2\pi t_2/N))+2\varepsilon(1-\cos(2\pi t_3/N))
\end{equation}  and $\text{Id}_{\bf t}$ is the  matrix of the identity operator on $V^d_{\bf t}$ (here $\otimes$ stands for the Kroneker product of matrices). It is clear that the eigenvalues of $A^d_{\bf t}(a)$ are given by
\begin{equation}\label{eig_d_syst}
\lambda_{\bf t}^d(a)= \frac{\nu a\pm \sqrt{(\nu a)^2-4bK_{\bf t}}}{2}.
\end{equation}

\bigskip

The linearization of system \eqref{nvdPl_2d} at the origin restricted to the isotypical component  $V^z_{\bf t}\oplus V^z_{\bf t}$ is given by $A^z_{\bf t}(a):V^z_{\bf t}\oplus V^z_{\bf t} \to V^z_{\bf t}\oplus V^z_{\bf t}$, where
$$A^z_{\bf t}(a)= \left(
\begin{array}{cccc}
H_{\bf t}(a) & G_{\bf t} & -1 & 0\\
-G_{\bf t} & H_{\bf t}(a)& 0 & -1\\
b & 0 & 0 & 0 \\
0 & b & 0 & 0
\end{array}
\right),$$
$$
H_{\bf t}(a) = a-\delta(1-\cos(2\pi t_1/N))-\zeta(1-\cos(2\pi t_2/N))-\varepsilon(1-\cos(2\pi t_3/N))$$ and 
\begin{equation}\label{H_t}
G_{\bf t}=\delta(\sin(2\pi t_1/N))+\zeta(\sin(2\pi t_2/N))+\varepsilon(\sin(2\pi t_3/N)).
\end{equation} To compute the eigenvalues of $A^z_{\bf t}(a)$, we should notice that  $A^z_{\bf t}(a) = {}^\mathbb RL_{\bf t}(a)$, where $L_{\bf t}(a)$ is the complex matrix

\begin{equation}
\label{matL_t}
L_{\bf t}(a)= \left(
\begin{array}{cc}
H_{\bf t}(a)-iG_{\bf t}  & -1\\
b & 0
\end{array}
\right)
\end{equation}
(here the symbol ${}^\mathbb RL_{\bf t}$ stands for the realification of $L_{\bf t}$). It is well-known that $\sigma ({}^\mathbb RL_{\bf t}(a)) = \sigma (L_{\bf t}(a)) \cup \overline{\sigma (L_{\bf t}(a))}.$ It is easy to see that the eigenvalues of $L_{\bf t}(a)$ are given by
\begin{equation}\label{eig_z_sys}
\lambda^{z}_{\bf t}(a)= \frac{H_{\bf t}(a)-iG_{\bf t} \pm \sqrt{(H_{\bf t}(a)-iG_{\bf t})^2-4b}}{2}
\end{equation}
\begin{remark}{\rm For a generic choice of parameters $\gamma, \zeta, \varepsilon$, all eigenvalues $\lambda^{z}_{\bf t}(a)$ are distinct and in the case $\lambda^{z}_{\bf t}(a)$ is purely imaginary, not in resonance.}
\end{remark}

\subsection{Isotropies in  $(V^z_{\bf t})^c$ and $(V^d_{\bf t})^c$}\label{iso_sec}
{\bf Note}: 
For an explanation of all symbols used in this section, see the Appendix.

{(a)} Since $(V^z_{\bf t})$ is a real irreducible $G_1$-representation of complex type, $(V^z_{\bf t})^c$ decomposes into the direct sum of two non-equivalent conjugate irreducible $(G_1 \times S^1)$-representations:
$V_1\oplus \overline V_1$. Since $V_1$ is one-dimensional, it has only two orbit types, namely $(G_1\times S^1)$ and  $(\mathbb Z_n\times \mathbb Z_n \times \mathbb Z_n)^{(t_1,  t_2, t_3)}$.
Similarly, $\overline V_1$ has only   $G_1\times S^1$ and  $(\mathbb Z_n\times \mathbb Z_n \times \mathbb Z_n)^{(-t_1,  -t_2, -t_3)}$.

\bigskip

{(b)}  Let us recall the following
\begin{definition}\label{def-max-orbit}
{\rm  We will call an orbit type $H < G \times S^1$ maximal in $\mathcal U^c_j$ if for any $\widetilde{H} \neq  G \times S^1$ which is an orbit type in $\mathcal U^c_j$, one has $\widetilde{H}<H$.}
\end{definition}

To restrict candidates for maximal isotropies in  $(V^d_{\bf t})^c$, we use the following simple observation.
\begin{lemma}\label{lem:factor-product}
Let $\mathcal G_1$ (resp. $\mathcal G_2$) be a finite group and let $U_1$ (resp. $U_2$)  be a unitary $\mathcal G_1 \times S^1$-respresentation (resp.  $G_2  \times S^1$-respresentation), where
$S^1$ acts on $U_1$ and $U_2$ by complex multiplication.

\medskip

(i) If $H_1^{\varphi_1}$ is a twisted isotropy in $U_1$ and  $H_2^{\varphi_2}$ is a twisted isotropy in $U_2$, then $(H_1 \times H_2)^{(\varphi_1,\varphi_2)}$ is an isotropy in $U_1 \otimes U_2$.

\medskip



(ii) If $(H_1 \leftidx{}{_{K_1}}{\times}_{K_2} H_2)^{(\varphi_1,\varphi_2)}$ is an isotropy in $U_1 \otimes U_2$, then for some $v_1 \in U_1$ and $v_2 \in U_2$, one has $\mathcal G_{v_1} \geq K_1^{\varphi_1}$ and
$\mathcal G_{v_2} \geq K_2^{\varphi_2}$ (cf. \eqref{amal_not}).
\end{lemma}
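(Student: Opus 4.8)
The plan is to reduce both assertions to two elementary facts: the rigidity of nonzero decomposable (rank-one) tensors, and the dictionary between fixed points and scalar eigenvalues. Throughout, the group acting on $U_1\otimes U_2$ is $\mathcal G_1\times\mathcal G_2\times S^1$, where the \emph{single} circle acts by complex multiplication: the circle factor of $\mathcal G_1\times S^1$ and that of $\mathcal G_2\times S^1$ both act on $U_1\otimes U_2$ as scalar multiplication, so they coincide. I will use repeatedly that, since the representations are unitary, an element $g\in\mathcal G_i$ acts on a vector $v$ as a scalar $\mu$ (necessarily $|\mu|=1$) if and only if $(g,\mu^{-1})$ fixes $v$; consequently a twisted subgroup $H^\varphi$ fixes $v$ precisely when $g\cdot v=\overline{\varphi(g)}\,v$ for every $g\in H$.

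For part (i), I would choose $v_1\in U_1$ and $v_2\in U_2$ whose isotropy groups in $\mathcal G_1\times S^1$ and $\mathcal G_2\times S^1$ are $H_1^{\varphi_1}$ and $H_2^{\varphi_2}$, and compute the isotropy of $v_1\otimes v_2$. By the dictionary above, $(h_1,h_2,\varphi_1(h_1)\varphi_2(h_2))$ fixes $v_1\otimes v_2$ for all $h_i\in H_i$, so $(H_1\times H_2)^{(\varphi_1,\varphi_2)}$ is contained in the isotropy. For the reverse inclusion I would exploit that a nonzero decomposable tensor determines its two factors up to reciprocal scalars: if $(g_1,g_2,z)$ fixes $v_1\otimes v_2$, then $(g_1 v_1)\otimes(g_2 v_2)=\overline z\,(v_1\otimes v_2)$ forces $g_1 v_1=\mu v_1$ and $g_2 v_2=\nu v_2$ with $\mu\nu=\overline z$; the dictionary then gives $g_1\in H_1$, $g_2\in H_2$ with $\mu=\overline{\varphi_1(g_1)}$, $\nu=\overline{\varphi_2(g_2)}$, whence $z=\varphi_1(g_1)\varphi_2(g_2)$. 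Thus the isotropy of $v_1\otimes v_2$ is exactly $(H_1\times H_2)^{(\varphi_1,\varphi_2)}$, which in particular is an isotropy in $U_1\otimes U_2$.

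For part (ii), suppose $L^\psi:=(H_1\leftidx{}{_{K_1}}{\times}_{K_2}H_2)^{(\varphi_1,\varphi_2)}$ is the isotropy of some $w\in U_1\otimes U_2$; since $L^\psi$ is a proper subgroup, we have $w\neq0$ (the isotropy of $0$ being the whole group). By the defining property of the amalgamated symbol (cf. \eqref{amal_not}), its intersections with the two factors are exactly $K_1$ and $K_2$ carrying the induced twists, i.e. $L^\psi$ contains the embedded twisted subgroups $k_1\mapsto(k_1,e,\varphi_1(k_1))$ and $k_2\mapsto(e,k_2,\varphi_2(k_2))$. Since these fix $w$, the dictionary yields $(k_1\otimes\mathrm{id})\,w=\overline{\varphi_1(k_1)}\,w$ for all $k_1\in K_1$ and $(\mathrm{id}\otimes k_2)\,w=\overline{\varphi_2(k_2)}\,w$ for all $k_2\in K_2$. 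Writing $w=\sum_j e_j\otimes f_j$ with $\{f_j\}\subset U_2$ linearly independent, the first family of identities forces $k_1 e_j=\overline{\varphi_1(k_1)}\,e_j$ for every $j$ and every $k_1\in K_1$; as $w\neq0$ some $e_j$ is nonzero, and $v_1:=e_j$ then satisfies $\mathcal G_{v_1}\geq K_1^{\varphi_1}$. A symmetric argument, decomposing $w$ with linearly independent first factors, produces $v_2\in U_2$ with $\mathcal G_{v_2}\geq K_2^{\varphi_2}$.

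The only genuinely delicate point, which I would treat with care, is bookkeeping the conventions: the direction of the scalar $S^1$-action (hence the conjugates $\overline{\varphi_i}$), and the precise content of the amalgamated symbol $H_1\leftidx{}{_{K_1}}{\times}_{K_2}H_2$ — specifically that its intersection with each factor equals $K_1$, respectively $K_2$, together with the induced twists. Once \eqref{amal_not} is invoked to pin this down, both parts collapse to the rigidity of decomposable tensors and to reading off common eigenvectors from $w=\sum_j e_j\otimes f_j$, and the remaining verifications are routine.
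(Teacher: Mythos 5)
Your proof is correct and follows essentially the same route as the paper's: for (i) it takes vectors $v_1,v_2$ realizing the two twisted isotropies and uses the rigidity of nonzero decomposable tensors to pin down the isotropy of $v_1\otimes v_2$ exactly, and for (ii) it expands the fixed vector as $\sum_j e_j\otimes f_j$ with linearly independent second factors and reads off that each $e_j$ is a common eigenvector for $K_1^{\varphi_1}$ (symmetrically for $K_2^{\varphi_2}$). If anything, your write-up is more careful than the paper's (which contains sign/index typos in both parts), since you make explicit the scalar-eigenvalue dictionary, the nonvanishing of $w$, and the fact that $K_i$ embeds into the amalgamated subgroup as $\{(k_1,e)\}$, resp.\ $\{(e,k_2)\}$.
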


\begin{proof}
(i) Take $v_1 \in (\mathcal U_1)_{H_1^{\varphi_1}}$ and $v_2 \in (\mathcal U_2)_{H_2^{\varphi_2}}$.
Then, for any  
$$
g:=(h_1,h_2,\varphi_1(h_1)\varphi_2(h_2)) \in (H_1 \times H_2)^{(\varphi_1,\varphi_2)},
$$
one has $g(v_1 \otimes v_2) = v_1 \otimes v_2$, i.e.,
$\mathcal G_{v_1 \otimes v_2} \geq (H_1 \times H_2)^{(\varphi_1,\varphi_2)}$. On the other hand, if $\mathcal G_{v_1 \otimes v_2} \ni
(h_1,h_2,e^{i \theta})$, then $\Big( e^{i \theta} T_{h_1} \otimes T_{h_1} \Big)  v_1 \otimes v_2  =  v_1 \otimes v_2$, which implies that for some $\hat{\theta}$ and $\tilde{\theta}$ with
$\theta = \hat{\theta} + \tilde{\theta}$ one has $e^{i\hat{\theta}} T_{h_1} v _1 = v_1$ and $e^{i\tilde{\theta}} T_{h_2} v _2 = v_2$.  Hence,
$(h_1, e^{i\hat{\theta}}) \in \mathcal G_{v_1} = H_1^{\varphi_1}$, $(h_1, e^{i\tilde{\theta}}) \in \mathcal G_{v_2} =  H_2^{\varphi_2}$ and
$(h_1,h_2,e^{i\theta}) = (h_1,h_2,e^{i\hat{\theta}} \cdot e^{i\tilde{\theta}}) \in (H_1 \times H_2)^{(\varphi_1,\varphi_2)}$
i.e., $\mathcal G_{v_1\otimes v_2}\leq (H_1\times H_2)^{(\varphi_1,\varphi_2)}$.

\medskip

(ii) Take $v \in (U_1 \times U_2)_{(H_1 \times H_2)^{(\varphi_1, \varphi_2)}}$ and decompose it as
$$
v  = \sum_{i=1}^n v_i \otimes e_i.
$$
For any $g := (k_1,1_{H_1},\varphi_1(k_1)) \in (H_1 \times H_2)^{(\varphi_1, \varphi_2)}$, one has
$$
g \cdot v = \sum_{i=1}^n  e^{\varphi_i(k_i)} T_{h_1} v_i \otimes e_i = \sum_{i=1}^n v_i \otimes e_i,
$$
hence, $v_i \in U_1^{K_1^{\varphi_1}}$
for any $i = 1,...,n$, i.e.,
$(\mathcal G_1)_{v_i} \leq H_1^{\varphi_1}$. A similar argument shows that $(\mathcal G_2)_{v_i} \leq H_2^{\varphi_2}$.

\end{proof}

A direct consequence of Lemma \ref{lem:factor-product} is the following:
\begin{corollary}\label{for:two observations}
Under the notations of Lemma \ref{lem:factor-product}, assume that $H_1^{\varphi_1}$ and $H_2^{\varphi_2}$ are maximal isotropies in $U_1$ and $U_2$, respectively, and  $N_{\mathcal G_i}(H_i)= H_i$.
Then, $(H_1 \times H_2)^{(\varphi_1,\varphi_2)}$
is a maximal isotropy in the tensor product.

\end{corollary}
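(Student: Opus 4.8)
The plan is to prove the stronger assertion demanded by Definition \ref{def-max-orbit}: that every proper orbit type in $U_1\otimes U_2$ is subconjugate to $P:=(H_1\times H_2)^{(\varphi_1,\varphi_2)}$. By Lemma \ref{lem:factor-product}(i), $P$ is itself an orbit type, and since $H_i^{\varphi_i}\neq \mathcal G_i\times S^1$ it is proper; so $P$ is a legitimate candidate for the (unique) maximum. First I would fix an arbitrary proper isotropy $\mathcal H=(\mathcal G_1\times\mathcal G_2\times S^1)_w$ with $w\neq 0$ and reduce everything to bounding $\mathcal H$ by a single conjugate of $P$.

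The first step is to control the two coordinate slices $S_1:=\mathcal H\cap(\mathcal G_1\times\{1\}\times S^1)$ and $S_2:=\mathcal H\cap(\{1\}\times\mathcal G_2\times S^1)$. Expanding $w=\sum_i v_i\otimes e_i$ in an orthonormal basis $\{e_i\}$ of $U_2$, a direct computation shows $(g_1,1,s)\in S_1$ if and only if $(g_1,s)\in\mathcal G_{v_i}$ for every $i$; choosing an index $i_0$ with $v_{i_0}\neq 0$ gives $S_1\leq\mathcal G_{v_{i_0}}$. As $U_1$ is a nontrivial irreducible it has no nonzero fixed vector, so $\mathcal G_{v_{i_0}}$ is a proper isotropy, and maximality of $H_1^{\varphi_1}$ forces $S_1\leq H_1^{\varphi_1}$ up to conjugacy. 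The symmetric argument, expanding $w$ in a basis of $U_1$, gives $S_2\leq H_2^{\varphi_2}$.

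The decisive step is to pass from the slices to the full projections $L_1:=\mathrm{pr}_{\mathcal G_1}(\mathcal H)$ and $L_2:=\mathrm{pr}_{\mathcal G_2}(\mathcal H)$. Writing $\mathcal H$ in the amalgamated (Goursat) normal form $(L_1\leftidx{}{_{K_1}}{\times}_{K_2}L_2)^{(\psi_1,\psi_2)}$ supplied by Lemma \ref{lem:factor-product}(ii), the slice $S_1$ is exactly the kernel block carrying the $S^1$-twist, and $L_1$ normalizes it. When this block attains the full maximal group, i.e. $S_1=H_1^{\varphi_1}$, the computation $N_{\mathcal G_1\times S^1}(H_1^{\varphi_1})=N_{\mathcal G_1}(H_1)\times S^1$ together with the hypothesis $N_{\mathcal G_1}(H_1)=H_1$ yields $L_1\leq H_1$; symmetrically $L_2\leq H_2$. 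Since $(L_1\leftidx{}{_{K_1}}{\times}_{K_2}L_2)\leq L_1\times L_2$ and the Goursat isomorphism matches the twist $(\psi_1,\psi_2)$ with $(\varphi_1,\varphi_2)$, we obtain $\mathcal H\leq P$ up to conjugacy, which is maximality.

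The main obstacle is precisely this second step, namely the amalgamation. Tensor-product isotropies are genuine amalgamated products, not direct products, and the coupling can a priori enlarge the top groups $L_i$ beyond $H_i$: if the Weyl group $N_{\mathcal G_i}(H_i)/H_i$ were nontrivial, the amalgam $(N_{\mathcal G_i}(H_i)\leftidx{}{_{H_i}}{\times}_{H_i}N_{\mathcal G_i}(H_i))$ over a nontrivial Weyl element would be a strictly larger proper isotropy and $P$ would fail to be maximal. The hypothesis $N_{\mathcal G_i}(H_i)=H_i$, i.e. triviality of the Weyl group, is exactly what rules this out. The remaining delicate points are bookkeeping: confirming that the kernel block is the full $H_i^{\varphi_i}$ rather than a proper subgroup with a larger normalizer (so that the normalizer squeeze applies), and tracking the single shared $S^1$, which entangles the two twists and must be carried consistently through the Goursat isomorphism $L_1/K_1\cong L_2/K_2$.
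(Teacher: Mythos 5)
Your proposal aims at a strictly stronger statement than the one needed, and the extra strength is where it breaks. You set out to show that \emph{every} proper orbit type in $U_1\otimes U_2$ is subconjugate to $P:=(H_1\times H_2)^{(\varphi_1,\varphi_2)}$, i.e.\ that $P$ is the unique maximum. In the situation where the corollary is actually used (Lemma \ref{YTM2}), each factor $(\mathcal U^d_{t_i})^c$ with $t_i\neq 0$ carries \emph{several} pairwise non-subconjugate maximal orbit types, namely $(\mathbb Z_N^{t_i})$, $(\mathbb D_1^+)$, $(\mathbb D_1^-)$, and correspondingly the tensor product contains isotropies such as $(\mathbb Z_N\times \mathbb Z_N)^{(t_1,t_2)}$ which are not subconjugate to $(\mathbb D_1\times\mathbb D_1)^{(\varphi_1,\varphi_2)}$. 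So the assertion you are trying to prove is false in the relevant examples; ``maximal'' must be read in the usual sense (no proper isotropy strictly contains it), despite the wording of Definition \ref{def-max-orbit}. This already invalidates your first step: from $S_1\leq \mathcal G_{v_{i_0}}$ you conclude $S_1\leq H_1^{\varphi_1}$ ``by maximality,'' but maximality of $H_1^{\varphi_1}$ only forbids proper isotropies strictly \emph{above} it; $\mathcal G_{v_{i_0}}$ may well be a different maximal isotropy.

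The second, independent gap is the one you flag yourself but do not close: your normalizer squeeze requires the slice $S_1$ to be all of $H_1^{\varphi_1}$, and for an arbitrary isotropy $\mathcal H$ there is no reason for this --- a generic vector has a small (even trivial) slice, and then $L_1$ normalizes only a small kernel, which may have a large normalizer. Both problems disappear if you argue as the paper does: assume for contradiction that some proper isotropy $\mathcal H$ \emph{strictly contains} $P$. The containment itself forces $H_i^{\varphi_i}\leq S_i$, so the Goursat kernels satisfy $K_i\geq H_i$; Lemma \ref{lem:factor-product}(ii) together with maximality of $H_i^{\varphi_i}$ rules out $K_i\gneq H_i$; and then $\widetilde H_i\leq N_{\mathcal G_i}(H_i)=H_i$ forces $\mathcal H=P$, a contradiction. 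Your identification of the role of the hypothesis $N_{\mathcal G_i}(H_i)=H_i$ (killing the Weyl-group amalgams) is exactly right; the fix is to run the argument only over supergroups of $P$, where the kernel blocks are automatically large enough for the squeeze to apply.
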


\begin{proof}
From Lemma \ref{lem:factor-product}(i) it immediately follows that $(H_1 \times H_2)^{(\varphi_1,\varphi_2)}$  is an isotropy. Assume for contradiction that it is submaximal. Then

\begin{equation}\label{inclusion2}
(H_1 \times H_2)^{(\varphi_1,\varphi_2)} \lneq (\widetilde H_1 \leftidx{}{_{K_1}}{\times}_{K_2} \widetilde H_2)^{(\widetilde\varphi_1,\widetilde\varphi_2)},
\end{equation} where
\begin{equation}\label{inclusion!}
\widetilde H_i \vartriangleright K_i > H_i
\end{equation} and  $\widetilde{\varphi_i}$ is an extension of $\varphi_i$ to $\widetilde H_i$. 
If $K_i\gneq H_i$ then by Lemma \ref{lem:factor-product}(ii), $K_i^{\varphi_i}$ is contained in an isotropy, which contradicts maximality of $H_i^{\varphi_i}$.
 Since, by assumption, $N_{\mathcal G_i}(H_i) = H_i$ it follows from \eqref{inclusion!} that $\widetilde H_i = H_i$ which contradicts \eqref{inclusion2}.
\end{proof}

\medskip
Returning to the particular situation, where $\mathcal G_i = \mathbb D_N$ and $U_i = (\mathcal U^d_{t_i})^c$, we have the following:
\begin{lemma}\label{YTM2}
If $H_1^{\varphi_1}, H_2^{\varphi_2}$ and $H_3^{\varphi_3}$ are  maximal isotropies in $(\mathcal U_{{t}_1}^d)^c$, $(\mathcal U_{{t}_2}^d)^c$ and $(\mathcal U_{{t}_3}^d)^c$ respectively, then $(H_1\times H_2 \times H_3)^{\varphi_1\varphi_2\varphi_3}$ is a maximal isotropy in $(\mathcal{V}_{\bf t}^d)^c$.
\end{lemma}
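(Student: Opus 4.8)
The plan is to deduce the three--factor statement from the two--factor Corollary \ref{for:two observations} by a single induction step, after first recasting the ambient representation in tensor form. So I would begin by identifying
\[
(\mathcal V_{\bf t}^d)^c \cong \big((\mathcal U_{t_1}^d)^c \otimes (\mathcal U_{t_2}^d)^c\big)\otimes (\mathcal U_{t_3}^d)^c
\]
as $(\mathbb D_N\times\mathbb D_N\times\mathbb D_N)\times S^1$--representations, where $S^1$ acts on each complexified factor by complex multiplication and on the tensor product by the single diagonal scalar action, exactly as in Lemma \ref{lem:factor-product}; under this identification the three twists compose to the product $\varphi_1\varphi_2\varphi_3$. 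This rests on $(A\otimes_{\mathbb R}B)^c\cong A^c\otimes_{\mathbb C}B^c$ and on the compatibility of the complexification's complex structure with the diagonal $S^1$, which I would verify at the outset.

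With this in hand I would invoke Corollary \ref{for:two observations} twice. In the first step I take $\mathcal G_1=\mathcal G_2=\mathbb D_N$, $U_1=(\mathcal U_{t_1}^d)^c$, $U_2=(\mathcal U_{t_2}^d)^c$, obtaining that $(H_1\times H_2)^{(\varphi_1,\varphi_2)}$ is maximal in $(\mathcal U_{t_1}^d)^c\otimes(\mathcal U_{t_2}^d)^c$. In the second step I take $\mathcal G_1=\mathbb D_N\times\mathbb D_N$ with this tensor product and its maximal isotropy $H_1\times H_2$, together with $\mathcal G_2=\mathbb D_N$, $U_2=(\mathcal U_{t_3}^d)^c$, concluding that $\big((H_1\times H_2)\times H_3\big)^{((\varphi_1,\varphi_2),\varphi_3)}=(H_1\times H_2\times H_3)^{\varphi_1\varphi_2\varphi_3}$ is maximal. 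To feed the Corollary in the second step I would use $N_{\mathbb D_N\times\mathbb D_N}(H_1\times H_2)=N_{\mathbb D_N}(H_1)\times N_{\mathbb D_N}(H_2)$, so that the self--normalizing hypothesis passes from the factors to their product.

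The main obstacle is verifying the standing hypothesis $N_{\mathbb D_N}(H_i)=H_i$ of Corollary \ref{for:two observations} for the spatial part of each maximal isotropy in $(\mathcal U_{t_i}^d)^c$, which forces a classification of those isotropies. Using that $(\mathcal U_{t_i}^d)^c$ splits over $\mathbb Z_N$ into two conjugate weight lines interchanged by $\kappa$, I would show that for $t_i=0$ the only proper orbit type has spatial part $\mathbb D_N$ (trivially self--normalizing), and that for $t_i\neq0$ the reflection--type maximal isotropies have dihedral spatial parts $\mathbb D_{\gcd(t_i,N)}$; these are self--normalizing in $\mathbb D_N$ precisely because $N$ is odd, which is where the parity assumption enters.

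The remaining delicate case is the cyclic (rotating--wave) maximal isotropy with spatial part $\mathbb Z_N$: since $\mathbb Z_N$ is normal in $\mathbb D_N$ one has $N_{\mathbb D_N}(\mathbb Z_N)=\mathbb D_N\neq\mathbb Z_N$, so the Corollary does not apply verbatim and the amalgamated enlargement with core $K_i=\mathbb Z_N=H_i$ is a priori possible. I would rule it out directly through Lemma \ref{lem:factor-product}: the fixed subspace of the product twisted subgroup is the single pure--tensor weight line, and any isotropy element whose component in a cyclic factor is a reflection sends that factor's weight line to the conjugate line, hence moves the pure tensor off itself and cannot fix it. Thus the isotropy of the fixed line carries no such reflection, while in the dihedral and trivial factors its component already lies in the corresponding full factor isotropy $H_i$; so it coincides with the product twisted subgroup, which is therefore maximal. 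Treating the self--normalizing factors by the Corollary and the cyclic factors by this weight--line argument then covers all $(H^\varphi)\in\mathcal S({\bf t})$ and completes the proof.
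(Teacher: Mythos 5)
Your proof is correct and takes essentially the same route as the paper's: classify the maximal isotropies of each factor, use the fact that $N$ is odd to get self\--normalization of the dihedral spatial parts so that Lemma \ref{lem:factor-product} and Corollary \ref{for:two observations} apply, and dispose of the non\--self\--normalizing cyclic factors by observing that a reflection component swaps the two conjugate weight lines and therefore cannot fix the (one\--dimensional) fixed space of the product twisted group. You are somewhat more explicit than the paper in iterating the two\--factor corollary to three factors, in treating the mixed cyclic/dihedral cases (the paper only writes out the all\--cyclic case), and in recording the dihedral spatial parts as $\mathbb D_{\gcd(t_i,N)}$ rather than $\mathbb D_1$, but these are refinements of the same argument rather than a different one.
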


\begin{proof}
Let us begin by observing that the maximal orbit types in $(\mathcal U_{{t}_i}^d)^c$ are either $\mathbb D_N\times \{1\}$ in the case when $t_i=0$, or $(\mathbb Z_N^{t_i}),(\mathbb D_1^+)$ and $(\mathbb D_1^-)$ if $t_i \neq 0$. By assumption, $N$ is {\it odd}, therefore $N(\mathbb D_1)=\mathbb D_1$. This means that Lemma \ref{lem:factor-product} and Corollary \ref{for:two observations} exclude the possibility that $(H_1\times H_2 \times H_3)^{\varphi_1\varphi_2\varphi_3}$ is not a maximal isotropy except in the case  when $(H_1\times H_2 \times H_3)^{\varphi_1\varphi_2\varphi_3}= (\mathbb Z_N\times \mathbb Z_N \times \mathbb Z_N)^{t_1t_2t_3}$. However, it follows from Lemma \ref{lem:factor-product} that the only candidate for a twisted subgroup of $\mathbb D_N\times \mathbb D_N\times \mathbb D_N\times S^1$, which is an isotropy in $(\mathcal{V}_{\bf t}^d)^c$ and contains $(\mathbb Z_N\times \mathbb Z_N \times \mathbb Z_N)^{t_1t_2t_3}$, is of the form $\mathcal H^\varphi$, where
$$
\mathcal H = \{(g_1,g_2,g_3)\in \mathbb D_N\times \mathbb D_N \times \mathbb D_N : \psi(g_1)=\psi(g_2)=\psi(g_3)\}
$$
 (here $\psi: \mathbb D_N \to \mathbb Z_2$ is the homomorphism with kernel $\mathbb Z_N$). On the other hand, it can be easily   seen that any vector $x\in (\mathcal{V}_{\bf t}^d)^c$ which is fixed by $(\mathbb Z_N\times \mathbb Z_N \times \mathbb Z_N)^{t_1t_2t_3}$ cannot be fixed by the element $(\kappa,\kappa,\kappa,e^{i\theta})$ for any $e^{i\theta}\in S^1$, hence $(\mathbb Z_N\times \mathbb Z_N \times \mathbb Z_N)^{t_1t_2t_3}$ is also maximal.
\end{proof}

\medskip

Being motivated by Lemma \ref{YTM2}, we introduce the following notations:
$$S(t_i)= \begin{cases}
\{(\mathbb D_N\times\{1\})\}  &\text{if} \; t_i=0\\
\{ (\mathbb Z_N^{t_i}) ,\; (\mathbb D_1^+), \; (\mathbb D_1^-)\} & \text{if} \; t_i \neq 0
\end{cases}
$$
and

\begin{equation}\label{desc_iso}
\mathcal S({\bf t})= \{(H_1\times H_2\times H_3)^{\varphi_1\varphi_2\varphi_3}: (H_i^{\varphi_i})\in S(t_i)\}
\end{equation}

\subsection{Stability analysis of system  \eqref{vdPl}}\label{subsec:Lyapunov-coef}
In this subsection, we analyze stability of Hopf branches of periodic solutions to system
 \eqref{vdPl}. This information will be used  later
for the stability analysis of the fully synchronized Hopf branches of periodic solutions to system \eqref{nvdPl_2d}.
\begin{lemma}\label{teoorigin}
For the parameter $a$ crossing zero, system  \eqref{vdPl} undergoes a
{\bf supercritical} Hopf bifurcation.
\end{lemma}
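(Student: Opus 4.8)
The plan is to verify the three standard Hopf conditions for the planar system \eqref{vdPl}---a simple conjugate pair of purely imaginary eigenvalues at $a=0$, the transversality (eigenvalue-crossing) condition, and nonvanishing of the first Lyapunov coefficient---and then to read off the \emph{sign} of that coefficient in order to decide criticality. First I would record that $(x,y)=(0,0)$ is an equilibrium of \eqref{vdPl} for every value of $a$, with Jacobian at the origin
$$
J(a)=\left(\begin{array}{cc} a & -1 \\ b & 0\end{array}\right),
$$
which is exactly the specialization of \eqref{matL_t} to $\mathbf t=(0,0,0)$ and $\delta=\zeta=\varepsilon=0$. Its eigenvalues are $\lambda_\pm(a)=\tfrac12\big(a\pm\sqrt{a^2-4b}\big)$ (cf. \eqref{eig_z_sys}), so at $a=0$ they equal $\pm i\sqrt b$, a simple conjugate pair on the imaginary axis since $b>0$. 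For $a$ near $0$ one has $a^2-4b<0$, whence $\re\lambda_\pm(a)=a/2$ and $\tfrac{d}{da}\re\lambda_\pm(0)=\tfrac12\neq 0$, giving transversality. Thus a Hopf bifurcation occurs as $a$ crosses $0$, and it only remains to determine whether it is sub- or supercritical.

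The next step is to put the system at $a=0$ into the Poincaré normal form required by the first Lyapunov coefficient formula. Applying the linear change of coordinates $u=x$, $v=y/\omega$ with $\omega:=\sqrt b$ turns the linear part into the standard rotation block and yields
$$
\dot u=-\omega v+f(u,v),\qquad \dot v=\omega u+g(u,v),\qquad f(u,v)=-u^2-u^3,\quad g\equiv 0 .
$$
I would then invoke the Guckenheimer--Holmes formula (\citep{Guckenheimer}, eq.~(3.4.11)/Theorem~3.4.2) for the coefficient
$$
16\,\ell_1=f_{uuu}+f_{uvv}+g_{uuv}+g_{vvv}+\frac{1}{\omega}\big[f_{uv}(f_{uu}+f_{vv})-g_{uv}(g_{uu}+g_{vv})-f_{uu}g_{uu}+f_{vv}g_{vv}\big],
$$
all derivatives evaluated at the origin. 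Because $g\equiv 0$ and $f$ depends on $u$ alone, every derivative of $g$ and every mixed derivative $f_{uv},f_{vv}$ vanishes, so the entire $\tfrac1\omega$-correction collapses and only the cubic term $f_{uuu}(0)=-6$ survives. Hence $\ell_1=-\tfrac38<0$, and a negative first Lyapunov coefficient together with a positive crossing speed gives a \textbf{supercritical} Hopf bifurcation (a stable limit cycle branching for $a>0$), which is the assertion of the lemma.

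The one point that must be handled with care---and the only place the argument could go astray---is the fate of the quadratic term $-x^2$. Naively, since $f_{uu}(0)=-2\neq 0$, one might expect the quadratic nonlinearity to feed into $\ell_1$; the content of the computation is that in this particular system it does not, because the quadratic data enter the formula only through products with $f_{uv}$ or with the identically vanishing derivatives of $g$. I would double-check this cancellation independently by the complex (Kuznetsov) form: writing $z=u+iv$, the normal-form coefficients are $g_{20}=g_{11}=-\tfrac12$ and $g_{21}=-\tfrac34$, so $\ell_1$ is proportional to $\re\big(ig_{20}g_{11}+\omega g_{21}\big)$; here $g_{20}g_{11}$ is real, forcing $\re(ig_{20}g_{11})=0$, and the surviving term reproduces $\ell_1=-3/(8\sqrt b)<0$. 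The agreement of the two formulas confirms both the vanishing of the quadratic contribution and the negative sign, completing the plan.
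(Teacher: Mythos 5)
Your proposal is correct and follows essentially the same route as the paper: a linear change of coordinates bringing the linearization at $a=0$ into standard rotation form, the transversality computation $\frac{d}{da}\re\lambda(0)=\frac12$, and the Guckenheimer--Holmes Theorem 3.4.2 formula yielding $l_1=-\frac38<0$ (the paper's coordinates put the nonlinearity $-\tilde y^2-\tilde y^3$ in the second equation rather than the first, but the computation and conclusion are identical). Your explicit verification that the quadratic term drops out of the Lyapunov coefficient, together with the Kuznetsov cross-check, is a useful elaboration of a step the paper states as ``by direct computation.''
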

\begin{proof}
By inspection, the eigenvalues of the linearization of \eqref{vdPl} at the origin have the form $\lambda = {a \pm \sqrt{a^2 - 4b} \over 2}$,
in particular, the Hopf bifurcation takes place when $a$ crosses zero.
To analyze the character of the bifurcation, take the linear
change of coordinates:
\begin{align}\label{changecoord}
\left\{
\begin{array}{l}
\tilde{x}=\displaystyle{\frac{a}{\sqrt{4b-a^2}}x - \frac{2}{\sqrt{4b-a^2}}}y\\
\\
\tilde{y}=x
\end{array}
\right. \\
\hspace{0.01cm}\Rightarrow
\left\{
\begin{array}{l}
\dot{\tilde{x}}=\displaystyle{\frac{a}{2}\tilde{x}-\frac{\sqrt{4b-a^2}}{2}} \tilde{y}  \; \displaystyle{-\frac{a\tilde{y}^2-a\tilde{y}^3}{\sqrt{4b-a^2}}}
\\
\\
\dot{\tilde{y}}=\displaystyle{\frac{\sqrt{4b-a^2}}{2}}\tilde{x}+\frac{a}{2}\tilde{y} \; \displaystyle{-\tilde{y}^2-\tilde{y}^3}.
\end{array}
\right.
\end{align}
By direct computation, near $a = 0$, one has:
\begin{equation}\label{Lyap-direction}
\frac{\partial} {\partial a } \Big(\text{Re}( \lambda)\Big)  =\frac{1}{2} > 0 \;\; \text {and}\;\; l_1 =-{3 \over 8} < 0,
\end{equation}
where $l_1$ stands for the first Lyapunov coefficient. Combining \eqref{Lyap-direction} with \citep{Guckenheimer}, Theorem 3.4.2, completes the proof.


\end{proof}

\section{Occurrence of Hopf bifurcations}\label{sect:occurrence-result}

\subsection{Abstract result}
Let $G$ be a finite group and $U$ be an orthogonal $G$-representation which admits a $G$-isotypical decomposition
\begin{equation}\label{gen_iso_decom}
U= U_0\oplus \cdots U_k,
\end{equation}
where $U_j$ is modeled on the irreducible representation $\mathcal U_j$. We will denote by $\mathcal U_j^c$ the complexification of $U_j$ which is a $G\times S^1$-representation.


\noindent Suppose $f:\mathbb R \oplus U \to U$ is a $C^1$-smooth function and consider the system

\begin{equation}\label{abst_sys}
\dot x(t)= f(\alpha,x).
\end{equation}

\begin{definition}\label{iso_cen}
We will say that $(\alpha_0,0)$ is an isolated center with limit frequency $\beta_0$ of \eqref{abst_sys} if:

\smallskip

\noi(i) $(\alpha_0,0)$ is a center of \eqref{abst_sys} with limit frequency $\beta_0$, that is $D_xf(\alpha_0,0)$ admits $i\beta_0$ as a purely imaginary eigenvalue;

\noi(ii) $(\alpha_0,0)$ is the only center in a neighborhood of  $(\alpha_0,0)$ in $\mathbb R \oplus U $.
\end{definition}


We are now in a position to formulate the abstract occurrence result which we will apply to the system considered in this paper.

\begin{theorem}[cf. \citep{AED}]\label{abs_therom}
Suppose $f$ in system \eqref{abst_sys} satisfies the following conditions:

\begin{itemize}
\item[(P1)] $f$ is a $C^1$-smooth equivariant map (we assume $G$ acts trivially on $\mathbb R$);

\item[(P2)] $f(\alpha,0)=0$ for all $\alpha \in \mathbb R$;

\item[(P3)] $(\alpha_0,0)$ is an isolated center for \eqref{abst_sys} (cf. Definition \ref{iso_cen});

\item[(P4)] $\det D_xf(\alpha_0,0) \neq 0$;

\item[(P5)] $D_xf(\alpha_0,0)_{|U_j}$ decreases stability as $\alpha$ passes $\alpha_0$, while the stability of $D_xf(\alpha_0,0)_{|U_k}$ does not increase for any $U_k$ (cf \eqref{gen_iso_decom}).
\end{itemize}
Then, for every  maximal orbit type $(H^\varphi)$ in $\mathcal U^c_j$, there exist $|(G\times S^1)/H^\varphi|_{S^1}$ branches of non-constant periodic solutions of \eqref{abst_sys} bifurcating from the origin  with (spatio-temporal) symmetry $(H^\varphi)$ and limit period $2\pi/\beta_0$ (cf. Definition \ref{spa_temp}). Here $|(G\times S^1)/H^\varphi|_{S^1}$ is the number of $S^1$-orbits in the space $(G\times S^1)/H$.
\end{theorem}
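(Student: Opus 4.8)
The plan is to recast the search for small-amplitude periodic solutions of \eqref{abst_sys} as a zero-finding problem for a $(G\times S^1)$-equivariant map and then to detect the bifurcating branches through a jump of an equivariant degree across $\alpha_0$. First I would normalize the unknown period: writing a solution of frequency $\beta$ as a $2\pi$-periodic function via the rescaling $u(s)=x(s/\beta)$, equation \eqref{abst_sys} becomes $\beta\,u'(s)=f(\alpha,u(s))$, so that $(\alpha,\beta)$ enter as real parameters and $u$ lives in the Sobolev space $\mathcal W=H^1_{2\pi}(\mathbb R;U)$ of $2\pi$-periodic $U$-valued maps. This space carries a natural $G\times S^1$-action, with $G$ acting pointwise through its representation on $U$ and $S^1=\mathbb R/2\pi\mathbb Z$ acting by the phase shift $(\tau\cdot u)(s)=u(s+\tau)$. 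By (P1) the induced Nemytskii operator is $(G\times S^1)$-equivariant, and by (P2) the constant $u\equiv 0$ solves the equation for every $(\alpha,\beta)$, giving the trivial branch whose symmetry breaking we analyze. Inverting $\beta\frac{d}{ds}+\mathrm{Id}$ turns the problem into a completely continuous field $\mathcal F(\alpha,\beta,u)=u-\mathcal K(\alpha,\beta,u)=0$ on $\mathcal W$, to which a $(G\times S^1)$-equivariant degree applies.

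Next I would build the bifurcation invariant. By (P3) the point $(\alpha_0,0)$ is an isolated center, so no other centers lie nearby, and by (P4), $\det D_xf(\alpha_0,0)\neq 0$ rules out a steady-state degeneracy; consequently $\mathcal F(\alpha,\cdot)$ is nonzero on the boundary of a small invariant ball $\Omega$ about the origin for $\alpha=\alpha_0\pm\eta$ (with $\beta$ near $\beta_0$). The local invariant is the difference
$$
\omega := \gdeg\big(\mathcal F(\alpha_0+\eta,\cdot),\Omega\big)-\gdeg\big(\mathcal F(\alpha_0-\eta,\cdot),\Omega\big),
$$
where $\gdeg$ is understood with respect to $G\times S^1$. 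A nonzero coefficient of $\omega$ at an orbit type $(H^\varphi)$ forces, by the existence property of the equivariant degree together with a standard Rabinowitz-type connectedness argument, a continuum of nontrivial periodic solutions emanating from $(\alpha_0,0)$ whose isotropy contains $(H^\varphi)$ and whose limit period is $2\pi/\beta_0$.

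I would then reduce $\omega$ to spectral data. Using the isotypical decomposition \eqref{gen_iso_decom} and the multiplicativity of the equivariant degree under the block-diagonal linearization $D_xf(\alpha_0,0)$, only blocks whose spectrum crosses the imaginary axis can change the degree. Hypothesis (P5) says that this crossing occurs only on $U_j$, and only in the stability-decreasing direction, while on every other $U_k$ the stability does not increase, so those factors do not produce a compensating jump and cancel in the difference. Thus $\omega$ collapses to the \emph{basic degree} of the irreducible $(G\times S^1)$-representation $\mathcal U_j^c$ --- the complexification of $U_j$ with $S^1$ acting by complex multiplication, matched to the fundamental Fourier mode carrying frequency $\beta_0$ --- and the eigenvalue-crossing in (P5) is precisely what makes this basic-degree factor change nontrivially.

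The crux, which I expect to be the main obstacle, is the nonvanishing of the coefficient of this basic degree at each maximal orbit type $(H^\varphi)$ in $\mathcal U_j^c$. Here I would invoke the general computation of basic degrees of irreducible $(G\times S^1)$-representations from \citep{AED}: the recurrence/splitting formula for the equivariant degree shows that for every maximal orbit type the corresponding coefficient equals $\pm 1$, hence is nonzero. This produces, for each such $(H^\varphi)$, a branch of nonconstant periodic solutions whose isotropy is exactly $(H^\varphi)$ (maximality forbids a larger one) with limit period $2\pi/\beta_0$. Finally the count follows from the orbit geometry: the $S^1$-phase-shift acts freely on nonconstant solutions, sweeping out each periodic orbit, so within the $G\times S^1$-orbit of a single branch the number of \emph{distinct} branches equals the number of $S^1$-orbits in $(G\times S^1)/H$, i.e. $|(G\times S^1)/H^\varphi|_{S^1}$, as asserted.
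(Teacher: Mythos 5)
Your outline is correct and follows essentially the same route as the source of this statement: the paper does not prove Theorem \ref{abs_therom} itself but imports it from \citep{AED}, where the argument is exactly the one you sketch --- reformulation as a $(G\times S^1)$-equivariant completely continuous field on a space of $2\pi$-periodic functions, a local bifurcation invariant computed as a product of isotypical crossing numbers (nontrivial only on $U_j$ by (P4)--(P5)) with basic degrees, nonvanishing of the basic-degree coefficients at maximal twisted orbit types, and the $S^1$-orbit count $|(G\times S^1)/H^\varphi|_{S^1}$. The only point worth tightening is your remark that the non-crossing blocks ``cancel in the difference'': they need not cancel, but (P5) guarantees their crossing numbers have a consistent sign so they cannot annihilate the contribution from $U_j$, which is what the nonvanishing argument actually requires.
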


\begin{remark}
{\rm Using the concept of isotypical crossing number  one can relax condition (P5) (see, for example, \citep{AED,BK-chapter,Sliding}).}
\end{remark}

\subsection{Proofs of Theorems \ref{thm_vdp} and \ref{thm_vdpl}}\label{Proof-Theor-occurr}

To detect the occurrence of the equivariant Hopf bifurcation in systems \eqref{nvdP_2d} and \eqref{nvdPl_2d} and to classify symmetric properties of the resulting branches, we will combine the equivariant spectral data collected in Subsections \ref{sec_decop}--\ref{iso_sec} with Theorem \ref{abs_therom}.
\medskip

\noindent{\it (a) Proof of Theorem \ref{thm_vdp}}:
We begin by observing that conditions (P1) and (P2) are obvious. It follows immediately from \eqref{Dn_blocks}--\eqref{eig_d_syst} and $b>0$  that system \eqref{nvdP_2d} can only have a center $(a,0)$ when $a=0$.
Also, since $b,K_{\bf t}>0$, formula \eqref{Dn_blocks} implies (P4). Finally,
all eigenvalues cross the imaginary axis in the same direction (see \eqref{eig_d_syst}), meaning that (P5) is also satisfied. Combining this with the description of isotropies in Section \ref{iso_sec}(i) completes the proof of Theorem \ref{thm_vdp}.

\medskip
\noindent{\it (b) Proof of Theorem \ref{thm_vdpl}}: Observe that (P1) and (P2) are obvious. Plugging $\lambda = i\omega$ into the characteristic equation of matrix $L_{\bf t}(a)$ (cf. \eqref{matL_t}) and seperating real and imaginary parts yields that all centers $(a^z_{\bf t},0)$ of system \eqref{nvdPl_2d} are given by  \eqref{eq:bifur-points}.
From this (P3) follows immediately, while (P4) is provided by $b>0$. Finally, differentiating \eqref{eig_z_sys} with respect to $a$ at $a = a^z_{\bf t}$ shows that all eigenvalues cross the imaginary axis in the same direction.  Combining this with the description of isotropies in Section \ref{iso_sec}(ii) (in particular, formula \eqref{desc_iso}) completes the proof of Theorem \ref{thm_vdpl}.

\section{Stability of bifurcating branches}\label{sect-stability}

\subsection{Proof of Theorem \ref{Dn_instability}}
Recall that if the equilibrium is unstable then any bifurcating branch of non-constant periodic solutions will also be unstable. Since the eigenvalues of the linearization of system
 \eqref{nvdP_2d} at the origin are given by $\sqrt{-bK_{\bf t}}$, it is easy to see that if $K_{\bf t}$ is negative for some {\bf t}, then the theorem is proved. Take $k_1,k_2,k_3$ provided by the hypothesis of  Theorem \ref{Dn_instability}. Clearly, $K_{\bf t} < 0 $ for ${\bf t} = \big(k_1\big(\frac{n-1}{2}\big), k_2\big(\frac{n-1}{2}\big), k_3\big(\frac{n-1}{2}\big)\big)$ which completes the proof.

\medskip

\subsection{Proof of Theorem \ref{Zn_stab}} It is clear from formulas \eqref{H_t} and  \eqref{eig_z_sys} that the sign of $\text{Re} (\lambda_{\bf t}^z(a))$ is given by the sign of $H_{\bf t}(a)$. Observe that the
largest value of $H_{\bf t}(a)$ is achieved by the value of ${\bf t}$ specified in the Table \ref{tabelaBifurca} and changes its sign at $a = a^{*}$. Therefore, for any $\bf{t}$ and $a < a^*$, one has
$\text{Re} (\lambda_{\bf t}^z(a)) < 0$. Therefore, for $a < a^*$, the equilibrium is stable while for  $a > a^*$, the equilibrium is unstable.

In the case when $\delta, \zeta, \varepsilon < 0$, only one pair of eigenvalues crosses the imaginary axis at $a^*=0$. Observe that the central manifold coincides with the central space. To complete the proof we combine Lemma \ref{teoorigin} with the fact that all other eigenvalues have negative real part (cf. \citep{Guckenheimer}, Theorem 3.4.2).


\section{Existence of periodic solutions with prescribed period and symmetry
} \label{sec:Existence}

\subsection{Abstract result.}

To prove Theorem \ref{Existence-Dihedral}, we will use a slight modification of the main result from \citep{AED}, Chapter 12 (cf. Theorem 12.7). To begin with, we need the following
\begin{definition}\label{def:lambda-max-types}
{\rm Let $G$ be a finite group and let $V$ be a real orthogonal $G$-representation. Assume $A : V \to V$ is an equivariant linear operator and $\lambda \in \sigma(A^c)$.
Take the eigenspace $E(\lambda) \subset V^c$
and denote by $\mathcal O(\lambda)$ the set of all maximal $G \times S^1$-orbit types occurring in  $E(\lambda)$.
}
\end{definition}
Let $V :=\mathbb R^n$ be an orthogonal permutational $G$-representation and
consider the system
\begin{equation}\label{syst:VDP-Hirano_Ryb}
\begin{cases}
\dot{x}_i = \nu\Big(ax_i - {x^3_i \over 3}\Big) - \sum_{j=1}^n C_{ij}y_j\\
\dot{y}_i = b x_i
\end{cases} \quad (i = 1,\ldots, n).
\end{equation}

\begin{theorem}\label{theom-abstract-Hirano-Ryb} Assume $C$ is a non-singular $G$-equivariant symmetric matrix.
Then, for each real positive $p \not\in \{\frac{2\pi (2k-1)}{\sqrt{\mu}} \; : \; \mu \in \sigma^+(b C), \;  k \in \mathbb N \}$ and each $\mu \in \sigma(bC)$ satisfying $0 < (\frac{2\pi}{p})^2 <  \mu$, there exists $\nu > 0$ such that for every $(H^\varphi) \in \mathcal O(\mu)$, system  \eqref{syst:VDP-Hirano_Ryb}  admits $\frac{|G|}{|H|}$ $p$-periodic solutions with minimal symmetry $(H^varphi)$.
\end{theorem}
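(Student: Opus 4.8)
The plan is to derive Theorem~\ref{theom-abstract-Hirano-Ryb} from the abstract existence result \citep{AED}, Theorem~12.7 (cf.\ \citep{BFK,Hirano-Rybicki}), of which it is a mild reformulation for a general coupling matrix. First I would eliminate $y$ from \eqref{syst:VDP-Hirano_Ryb}: differentiating the first equation and using $\dot y_i=bx_i$ turns the system into the second-order coupled van der Pol form $\ddot x_i-\nu(a-x_i^2)\dot x_i+b\sum_j C_{ij}x_j=0$, whose linearization at the origin is governed by $\ddot x+bCx=0$. Thus the natural frequencies of the uncoupled modes are $\sqrt{\mu}$ with $\mu\in\sigma^+(bC)$, and $C$ being symmetric and $G$-equivariant guarantees that each eigenspace $E(\mu)$ is a $G$-subrepresentation and that the resulting field carries a variational ($G$-equivariant) structure. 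I would then recast the search for $p$-periodic solutions as the problem of finding zeros of a $G\times S^1$-equivariant map $\mathcal F_\nu$ on the Sobolev space of $p$-periodic $W$-valued loops, with $S^1=\mathbb R/p\mathbb Z$ acting by time translation and $G$ acting through its permutation representation on each copy of $V$.

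The non-resonance hypothesis on $p$ is what makes the equivariant degree of $\mathcal F_\nu$ well defined and the continuation nondegenerate: excluding $p\in\{2\pi(2k-1)/\sqrt{\mu}\}$ prevents the target period from being an odd-harmonic resonance of any linear mode $\sqrt{\mu}$, so that the relevant linearizations along the homotopy remain invertible on the loop space. Together with the standard energy a~priori bound for the van der Pol nonlinearity — which confines all $p$-periodic solutions to a fixed ball $B_R$ uniformly for $\nu$ in compact intervals — this yields an admissible pair $(\mathcal F_\nu,B_R)$ to which the $G\times S^1$-equivariant degree applies.

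To locate the solutions and identify their symmetry, I would exploit the spectral decomposition of $bC$. On the eigenspace $E(\mu)$ the problem reduces, mode by mode, to a scalar van der Pol oscillator of frequency $\sqrt{\mu}$, whose relaxation limit cycle has period depending monotonically on $\nu$ and sweeping the interval $(2\pi/\sqrt{\mu},\infty)$; the hypothesis $(2\pi/p)^2<\mu$ places the target period $p$ in this range, so some $\nu>0$ realizes it. Feeding this scalar crossing data together with the isotypical structure of $E(\mu)$ into the product/recurrence formula for the $G\times S^1$-equivariant degree (cf.\ \citep{AED}), one obtains a degree whose coefficient at every maximal orbit type $(H^\varphi)\in\mathcal O(\mu)$ (cf.\ Definition~\ref{def:lambda-max-types}) is nonzero. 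Nontriviality of that coefficient forces a branch of zeros with isotropy exactly $(H^\varphi)$, and unwinding a single $G\times S^1$-orbit of type $(H^\varphi)$ into honest periodic trajectories produces the asserted $|G|/|H|$ solutions of minimal symmetry $(H^\varphi)$.

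The hard part will be twofold. First, one must produce a single $\nu$ that both realizes the prescribed period $p$ and keeps $(\mathcal F_\nu,B_R)$ admissible; this hinges on the monotone dependence of the van der Pol period on $\nu$ together with the uniform a~priori bound. Second, one must show that the degree coefficient at each $(H^\varphi)\in\mathcal O(\mu)$ is genuinely nonzero and that the resulting solutions have symmetry \emph{exactly} $(H^\varphi)$ rather than something larger; the latter is where maximality of $(H^\varphi)$ in $E(\mu)^c$ is essential, since it prevents a larger isotropy from absorbing the branch. Because Theorem~\ref{theom-abstract-Hirano-Ryb} differs from \citep{AED}, Theorem~12.7 only in admitting an arbitrary non-singular symmetric $G$-equivariant $C$, the bulk of these estimates is inherited; what remains to verify is that phrasing the resonance set and the orbit-type count through $\sigma(bC)$ and $\mathcal O(\mu)$ leaves the argument intact, which is precisely the content of the excluded set and of Definition~\ref{def:lambda-max-types}.
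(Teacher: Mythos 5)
First, note that the paper does not actually prove Theorem \ref{theom-abstract-Hirano-Ryb}: it is stated as ``a slight modification of the main result from \citep{AED}, Chapter 12 (cf. Theorem 12.7)'' (see also \citep{BFK,Hirano-Rybicki}) and is then used as a black box in the proof of Theorem \ref{Existence-Dihedral}. So there is no internal proof to compare yours against; the relevant comparison is with the cited equivariant-degree argument. Your outline is consistent with that argument in its broad strokes: reformulation on a space of $p$-periodic loops, $G\times S^1$-equivariance, non-resonance of $p$ with the odd harmonics of the linear frequencies $\sqrt{\mu}$, detection of orbits of solutions with maximal isotropy via nonvanishing coefficients of the twisted equivariant degree, and the count $|G|/|H| = |(G\times S^1)/H^\varphi|_{S^1}$.

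However, as written the proposal is a plan rather than a proof: the two steps you yourself flag as ``the hard part'' are exactly the content of the theorem, and you carry out neither. (i) The claim that the scalar van der Pol period depends monotonically on $\nu$ and sweeps $(2\pi/\sqrt{\mu},\infty)$ is a nontrivial analytic assertion that the degree-theoretic proofs in \citep{AED,BFK,Hirano-Rybicki} deliberately avoid; there the existence of a suitable $\nu$ comes from comparing the equivariant degree on a small ball (computed from the linearization, where the contribution depends on the position of $(2\pi/p)^2$ relative to $\mu$ and its odd harmonics --- this is where the excluded set and the hypothesis $0<(2\pi/p)^2<\mu$ enter) with the degree on a large ball supplied by an a priori bound, not from tracking the period of a limit cycle. (ii) The nonvanishing of the degree coefficient at each $(H^\varphi)\in\mathcal O(\mu)$ requires an actual computation with the multiplicativity/recurrence formulae for the twisted degree, and in particular uses maximality of the orbit types in $\mathcal O(\mu)$ to prevent cancellation; asserting that ``one obtains a degree whose coefficient is nonzero'' is precisely what must be shown. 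Until these two steps are supplied, the proposal does not establish the statement, though it is pointed in the right direction and correctly identifies where the hypotheses of the theorem are used.
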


\subsection {Proof of Theorem \ref{Existence-Dihedral}}
The linearization of system  \eqref{syst:VDP-Hirano_Ryb} at the origin has the form:
$$
\mathfrak A=
\begin{bmatrix}
\nu a \text{Id} & -C\\
b \text{Id} & 0
\end{bmatrix}
$$
where Id stands for the $n\times n$-identity matrix. Since in the case of system \eqref{nvdP_2d}, the isotypical decomposition of $V$ contains only irreducible representations of real type, then $\mathfrak A_{|V_i}$ is of the form
$$
\begin{bmatrix}
\nu a  & -\lambda\\
b  & 0
\end{bmatrix} \otimes \text{Id}_{|V_i}
$$
where $\lambda$ is an eigenvalue of $C_{|V_i}$. It follows from Remark \ref{prim_decom} and formula \eqref{Dn_blocks} that
$$
\sigma(C)=\{K_{\bf_t}: 0<t_i<(N-1)/2\}.$$
Also note that $\mathcal O(K_{\bf t})= \mathcal S({\bf t})$. Combining this with Theorem \ref{theom-abstract-Hirano-Ryb} completes the proof of Theorem \ref{Existence-Dihedral}.

\section{Appendix}

If $W$ is a $G$-representation, then for any function $x:S^1 \to W$ the spatio-temporal symmetry of $x$ is a group $\mathfrak H < G \times S^1$ such that $g\cdot x(t-\theta)=x(t)$ for any $t\in \mathbb R/2\pi\mathbb Z \simeq S^1$ and any $(g,e^{i\theta}) \in \mathfrak H$. If $x$ is  non-constant, then its spatio-temporal symmetry group will have the structure of a graph of a homomorphism from some subgroup $H < G$ to $S^1$. In our general discussion, we used the following notation:
$$
H^\varphi := \{(h,\varphi(h)\;:\;h\in H)\}.
$$
We call this a {\it twisted} symmetry group with twisting homomorpism $\varphi$. If the domain of the twisting homomorphism is a direct product of groups, we can describe the twisting homomorphism by its restrictions to each of the components. Therefore, we use  the following notation:
\begin{equation}
 \label{prod_twist}
 \begin{aligned}
(H_1\times H_2\times H_3)^{\varphi_1\varphi_2\varphi_3} := &\{(h_1,h_2,h_3,\varphi_1(h_1)\varphi_2(h_2)\varphi_3(h_3))\\ &\;:\; (h_1,h_2,h_3)\in H_1\times H_2\times H_3\}
\end{aligned}
\end{equation}

  Given two groups $\mathscr G_1$ and $\mathscr G_2$, to describe subgroups of  $\mathscr G_1\times \mathscr G_2$,
  define the projection homomorphisms:
  \begin{align*}
    \pi _{1}&:\mathscr G_1\times\mathscr G_2\to \mathscr G_1,\quad\pi_1(g_1,g_2)=g_1;\\
    \pi _{2}&:\mathscr G_1\times\mathscr G_2\to \mathscr G_2,\quad\pi_2(g_1,g_2)=g_2.
  \end{align*}
  The following result, being a reformulation of the well-known
  Goursat's Lemma (cf.~\citep{Goursat}),
  provides the desired description of subgroups $\mathscr H$ of the
  product group $\mathscr G_1\times \mathscr G_2$.
  \begin{theorem}\label{th:product-1}
    Let  $\mathscr H$ be a subgroup of the product group
    $\mathscr G_1\times \mathscr G_2$. Put $H:=\pi_1(\mathscr H)$
    and $K:=\pi_2(\mathscr H)$. Then, there exist a group $L$ and
    two epimorphisms $\vp :H\rightarrow L$ and $\psi :K\rightarrow L$,
    such that
    \begin{equation}\label{eq:product-rep}
      \mathscr H=\{(h,k)\in H\times K: \vp(h)=\psi(k)\}.
    \end{equation}
    \end{theorem}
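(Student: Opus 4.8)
The plan is to realize $L$ as a common quotient of $H$ and $K$ obtained by dividing out the ``directions'' along which $\mathscr H$ contains a whole factor. First I would introduce the two slice subgroups $H_0 := \{h \in H : (h,1) \in \mathscr H\}$ and $K_0 := \{k \in K : (1,k) \in \mathscr H\}$; intuitively these record how much of each factor sits inside $\mathscr H$ independently of the other coordinate. The entire argument then reduces to three claims: that $H_0 \trianglelefteq H$, that $K_0 \trianglelefteq K$, and that the quotients $H/H_0$ and $K/K_0$ are canonically isomorphic. The common quotient will be the desired group $L$, and the two quotient maps will be the epimorphisms $\varphi$ and $\psi$.

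To see that $H_0$ is normal in $H$, I would take $h \in H_0$ and an arbitrary $h' \in H$. Since $h' \in H = \pi_1(\mathscr H)$, there is some $k' \in K$ with $(h',k') \in \mathscr H$; conjugating the element $(h,1) \in \mathscr H$ then gives $(h',k')(h,1)(h',k')^{-1} = (h'h(h')^{-1},1) \in \mathscr H$, so $h'h(h')^{-1} \in H_0$. The same computation with the two factors interchanged shows $K_0 \trianglelefteq K$.

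Next I would build the identification of the quotients. For $h \in H$ choose any $k \in K$ with $(h,k) \in \mathscr H$ and set $\theta(h) := kK_0 \in K/K_0$. The key point is that $\theta$ is well defined: if $(h,k_1),(h,k_2) \in \mathscr H$ then $(1,k_1k_2^{-1}) \in \mathscr H$, whence $k_1k_2^{-1} \in K_0$ and $k_1K_0 = k_2K_0$. A direct check shows $\theta$ is a surjective homomorphism with $\Ker\theta = H_0$, so by the first isomorphism theorem it descends to an isomorphism $\overline\theta : H/H_0 \to K/K_0$. I then set $L := H/H_0$, let $\varphi : H \to L$ be the canonical projection, and let $\psi : K \to L$ be the composite of the projection $K \to K/K_0$ with $\overline\theta^{-1}$; both are epimorphisms.

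Finally I would verify that $\mathscr H = \{(h,k) \in H \times K : \varphi(h) = \psi(k)\}$. By construction the condition $\varphi(h) = \psi(k)$ is equivalent to $\theta(h) = kK_0$. The inclusion $\subseteq$ is immediate from the definition of $\theta$. For $\supseteq$, if $\theta(h) = kK_0$, I would pick $k'$ with $(h,k') \in \mathscr H$ and $k'K_0 = kK_0$; then $k(k')^{-1} \in K_0$, so $(1,k(k')^{-1}) \in \mathscr H$, and multiplying yields $(h,k) \in \mathscr H$. I expect the main difficulty to be bookkeeping rather than conceptual: the delicate points are confirming that $\theta$ is independent of the chosen representative $k$ and establishing the reverse inclusion, both of which rely precisely on the normality of $K_0$ proved above.
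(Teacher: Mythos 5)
Your proof is correct and complete: the slice subgroups $H_0$ and $K_0$ are normal for exactly the reason you give, $\theta$ is well defined with kernel $H_0$, and both inclusions in the final verification go through. The paper itself offers no proof of this statement --- it is cited as a reformulation of the classical Goursat Lemma --- and your argument is precisely the standard proof of that lemma, so there is nothing in the paper to compare it against.
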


For the needs of our paper, it is enough to characterize such subgroups up to the kernels of the homomorphisms $\varphi$ and $\psi$.  For this reason, we put
\begin{equation}\label{amal_not}
\mathscr H =: H_1 \leftidx{}{_{K_1}}{\times}_{K_2}  H_2
\end{equation}
where $K_1$ is the kernel of $\varphi$ and $K_2$ is the kernel of $\psi$.

\medskip 

In the particular case of $\mathbb D_N$, we denote by $\mathbb Z_N$ the subgroup generated by $e^{\frac{2\pi i}{N}}$, and by $\mathbb D_1$ the subgroup generated by $\kappa$. Furthermore, we put
\begin{eqnarray*}
\mathbb Z_N^{t_i} := \mathbb Z_N^\varphi &\text{where} & \varphi:e^{k\frac{2\pi i}{N}}\mapsto e^{kt_i\frac{2\pi i}{N}}\\
\mathbb D_1^+ := \mathbb D_1^\varphi &\text{where} & \varphi:  \kappa \to 1\\
\mathbb D_1^- := \mathbb D_1^\varphi &\text{where} & \varphi:  \kappa \mapsto -1
\end{eqnarray*}
We combine this with the notation for twisted groups given in \eqref{prod_twist} in the obvious way, for example:
$$
(\mathbb Z_N\times \mathbb D_1\times \mathbb Z_N)^{(t_1,-,t_2)}:=(\mathbb Z_N\times \mathbb D_1\times \mathbb Z_N)^{\varphi_1\varphi_2\varphi_3}
$$
where
$\varphi_1:e^{k\frac{2\pi i}{N}}\mapsto e^{kt_1\frac{2\pi i}{N}}$, $\varphi_2:\kappa \mapsto -1$ and $\varphi_3:e^{k\frac{2\pi i}{N}}\mapsto e^{kt_2\frac{2\pi i}{N}}$.


\bibliographystyle{siam}
\bibliography{torus2_bib}{}
\end{document}